\definecolor{cobalt}{RGB}{61,99,181}
	\newtheorem{theorem}{Theorem}[section]
\newtheorem{lemma}[theorem]{Lemma}	
\newtheorem{definition}[theorem]{Definition}
\numberwithin{equation}{section}
\date{\today}
\newcommand{\Rmnum}[1]{\expandafter\@slowromancap\romannumeral #1@}
\begin{document}\bibliographystyle{plain}

\title{Entropy degeneracy and entropy explosion of flows}

\author[Mengjie Zhang]{Mengjie Zhang\textsuperscript{1}}
\address{\textsuperscript{1} College of Mathematics and Statistics, Chongqing University, Chongqing, 401331, P. R. China}
\email{202006021044t@stu.cqu.edu.cn}

\keywords{flow; entropy; entropy degeneracy; entropy explosion }


	\begin{abstract}
	In this paper, we prove that entropy degeneracy and entropy explosion happens in the flow constucted by Ohno. We also constucte a flow which has the only one invariant and ergodic measure supporting at a fixed point. This flow is no entropy explosion.
\end{abstract} \maketitle

\section{Introduction}
In 1980, Ohno constructed a flow \cite {1980A}, show that the topological equivalence of the two flow can not preserve zero entropy. In 2009, Sun,Young and Zhou constructed an example \cite{2009Topological}, pointing out that the above phenomenon exists even in ${C^r}$ differential manifolds, that is, there are two topologically equivalent ${C^r}$ flows that do not preserve zero entropy. This example illustrates that it is not the differential regularity and spatial topology of the flow, but the temporal parameterization of the flow that causes the singular change in entropy of the flow. In 2011, Sun and Zhang constructed the extremely extreme case \cite{2017Zero}, two topologiously equivalent flows, one with 0 entropy and one with $+ \infty $. The singularity described above occurs only between two flows that are topologiously equivalent with a fixed point, while two flows that are topologiously equivalent without a fixed point maintain 0 entropy and $+ \infty $\cite{1990Topological,2007Entropy,Bowen1972ExpansiveOF}.

We will further study the entropy change of two topologically equivalent flows. If a flow with topological entropy \cite{1969On} greater than zero is transformed into a flow with topological entropy equal to zero through the topological equivalence relation, we call this phenomenon entropy degeneracy. If it is transformed into a flow with increasing topological entropy through topological equivalence, we call it entropy explosion \cite{2008ENTROPY}. For the standard  suspension  flow constructed by Ohno, we prove that any given real number $b \in \left[{0,\left. {+ \infty} \right)} \right.$  a flow topologically equivalent to the standard  suspension  flow can be found and  the topological entropy of this flow is equal to b. This indicates that  entropy degeneracy and entropy explosion can occur in the standard  suspension  flow .

The structure of this paper is as follows: In Section 2, preparatory knowledge is introduced; In Section 3, it is shown that the flow constructed by Ohno can have entropy degradation and entropy explosion at the same time, which are Theorems A and B; In Section 4, a flow in which entropy explosion cannot occur is constructed, and the main results are Theorems C and D.

\section{Preliminary}
Throughout this article, we have agreed that $\left({X,d} \right)$ is a compact metric space and ${\mathscr B}\left(X \right)$ is a Borel $\sigma$ algebra. The following definitions and theorems can be found in reference \cite{2017Differential, Perko2001Differential, R1987Ergodic, 2008TIME} .

\begin{definition}
	$\phi$ denotes a flow on $\left( {X,d} \right)$ , that is, $\phi$ is a continuous mape $\phi :{\rm X} \times {\rm{R}} \to {\rm X}$ satisfying: 
	
	(1) $\phi \left( {x,0} \right) = x$, for all $x \in X$;
	
	(2) $\phi \left( {\phi \left( {x,s} \right),t} \right) = \phi \left( {x,s + t} \right)$, for all $x \in X$ and $s,t \in {\rm{R}}$.
\end{definition}
For the sake of narrative convenience, denote  (2) as ${\phi _t} \circ {\phi _s} = {\phi _{t + s}}$ . For all $t \in {\rm{R}}$ , ${\phi _t} = \phi \left( { \cdot ,t} \right):X \to X$  is continuous and has a continuous inverse ${\phi _{ - t}}$. Therefore, ${\phi _{  t}}$ is a homemorphism.
\begin{definition}\label{def:one}
	Let $\phi :X \times {\rm{R}} \to X$ be a flow on a compact metric space $X$ and measure $\mu $ is $\phi$-invariant. Definite the theoritical entropy and the topological entropy of a flow $\phi$ with respect to the measure$\mu $ , they are respectively:$h\left( \phi  \right) = \mathop {\sup }\limits_{\mu  \in {{\rm M}_{erg,\phi }}} {h_\mu }\left( \phi  \right)$ and $h\left( \phi  \right) = h\left( {{\phi _1}} \right)$ .	
\end{definition}

${{\rm M}_{erg,\phi }}$ denote the set of all invariant ergodical Borel probability measures for the flow $\phi$ .
\begin{theorem}[The variational principle]
	Let $\phi :X \times {\rm{R}} \to X$ be a flow on a compact metric space $X$. Then
	\begin{center}
		$h\left( \phi  \right) = \mathop {\sup }\limits_{\mu  \in {{\rm M}_{erg,\phi }}} {h_\mu }\left( \phi  \right)$.
	\end{center}	
	According to the definition of the entropy of the flow, it follows
	\begin{center}
		$h\left( \phi  \right) = \mathop {\sup }\limits_{\mu  \in {{\rm M}_{erg,\phi }}} {h_\mu }\left( {{\phi _1}} \right) = \mathop {\sup }\limits_{\mu  \in {{\rm M}_{erg,{\phi _1}}}} {h_\mu }\left( \phi  \right)$.
	\end{center}	
\end{theorem}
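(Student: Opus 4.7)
The plan is to reduce the variational principle for the flow $\phi$ to the classical variational principle for the time-one homeomorphism $\phi_1$. Since the conventions already introduced give $h(\phi) = h(\phi_1)$ on the topological side and $h_\mu(\phi) = h_\mu(\phi_1)$ on the measure-theoretic side, the whole content of the theorem is the reconciliation of the suprema taken over $M_{erg,\phi}$, $M_{erg,\phi_1}$, $M(\phi)$ and $M(\phi_1)$.

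First I would invoke the standard variational principle for the homeomorphism $\phi_1$ on the compact metric space $X$, which yields $h(\phi_1) = \sup_{\nu \in M(\phi_1)} h_\nu(\phi_1)$, where $M(\phi_1)$ denotes all $\phi_1$-invariant Borel probability measures. The usual ergodic decomposition combined with affinity of measure-theoretic entropy then shows that the supremum is unchanged if restricted to ergodic measures, so $h(\phi_1) = \sup_{\nu \in M_{erg,\phi_1}} h_\nu(\phi_1)$.

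Next I would bridge $\phi_1$-invariance and full $\phi$-invariance by the averaging trick. Given $\nu \in M(\phi_1)$, define $\bar\nu = \int_0^1 (\phi_s)_* \nu \, ds$. Because each $\phi_s$ commutes with $\phi_1$, every pushforward $(\phi_s)_*\nu$ is again $\phi_1$-invariant, and a change-of-variable argument in $s$ shows $\bar\nu$ is $\phi_t$-invariant for every $t \in \R$, so $\bar\nu \in M(\phi)$. Moreover, $\phi_s$ is a measure-theoretic isomorphism between $(\phi_1,\nu)$ and $(\phi_1,(\phi_s)_*\nu)$, hence $h_{(\phi_s)_*\nu}(\phi_1) = h_\nu(\phi_1)$ for every $s$. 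By affinity of entropy, integrating in $s$ gives $h_{\bar\nu}(\phi_1) = h_\nu(\phi_1)$. Thus every value $h_\nu(\phi_1)$ with $\nu \in M(\phi_1)$ is realised by some $\bar\nu \in M(\phi)$, yielding $\sup_{M(\phi_1)} h_\cdot(\phi_1) = \sup_{M(\phi)} h_\cdot(\phi_1)$.

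Finally I would apply the ergodic decomposition for the flow to pass from $M(\phi)$ down to $M_{erg,\phi}$: writing any $\mu \in M(\phi)$ as an integral of $\phi$-ergodic measures, affinity of entropy forces the supremum over $M(\phi)$ to be attained along flow-ergodic components. Chaining these identities with the definitional equalities $h(\phi) = h(\phi_1)$ and $h_\mu(\phi) = h_\mu(\phi_1)$ yields all three claimed equalities. The main obstacle I anticipate is the delicate verification that the ergodic decomposition of a $\phi$-invariant measure indeed produces $\phi$-ergodic (and not merely $\phi_1$-ergodic) components, together with measurability of $s \mapsto h_{(\phi_s)_*\nu}(\phi_1)$ needed to integrate; both are handled by the joint continuity of $(t,x) \mapsto \phi_t(x)$ and Fubini-type arguments on the decomposition, and this is the step where I would spend the most care.
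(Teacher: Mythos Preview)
The paper does not actually prove this statement: it appears in the Preliminary section as background material, with the surrounding remark that ``the following definitions and theorems can be found in reference \cite{2017Differential, Perko2001Differential, R1987Ergodic, 2008TIME}'', and no argument is supplied. So there is no ``paper's own proof'' to compare against.

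That said, your outline is the standard reduction to the time-one map and is correct. The chain you describe---classical variational principle for $\phi_1$, passage from $M(\phi_1)$ to $M_{erg,\phi_1}$ by ergodic decomposition, the averaging $\bar\nu=\int_0^1(\phi_s)_*\nu\,ds$ to land in $M(\phi)$ without changing $h_{\cdot}(\phi_1)$, and ergodic decomposition for the flow to descend to $M_{erg,\phi}$---is exactly how this is done in the references the paper cites (e.g.\ Walters). You also correctly flag the only genuinely delicate point, namely that the ergodic decomposition of a $\phi$-invariant measure yields $\phi$-ergodic (not merely $\phi_1$-ergodic) components; this is where the joint continuity of the flow and the Choquet-type decomposition for the simplex $M(\phi)$ enter. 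Nothing is missing from your plan.
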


\begin{definition}
	Let $X,Y$ be two compact metric spaces and $\phi :X \times \rm{R} \rightarrow X$ , $\psi :Y \times \rm{R} \rightarrow Y$ be two flows. If there is a preserving time orientation homeomorphism $\pi :X \rightarrow Y$ ,
	 satisfying for all $t \in {\rm{R}}$ ,
	\begin{center}
		$\left\{ {{\phi _t}\left( x \right)\left| {t \in {\rm{R}}} \right.} \right\} = \left\{ {{\pi ^{ - 1}}{\psi _t}\pi \left( x \right)\left| {t \in {\rm{R}}} \right.} \right\}$.
	\end{center}
	$\pi$ maps the orbits $Orb\left( {x,\phi } \right) = \left\{ {{\phi _t}\left( x \right)\left| {t \in {\rm{R}}} \right.} \right\}$  of $\phi$ in $x$ onto the orbits $Orb\left( {\pi \left( x \right),\psi } \right) = \left\{ {{\psi _t}\left( {\pi x} \right)\left| {t \in {\rm{R}}} \right.} \right\}$  of $\psi$ in $\pi \left( x \right) \in Y$ and preserves their time orientation. Then we call that two flows are topological equivalent.
	
	In particular, if it is satisfied that for all $x \in X$ and $t \in {\rm{R}}$, we have $\pi \circ {\phi _t}\left(x \right) = {\psi _t}\ circ \pi \left(x \right)$.The following exchange diagram is obtained:
	\begin{center}
		$ \begin{array}{*{20}{c}}
			X&{\mathop  \to \limits^{{\phi _t}} }&X\\
			{ \downarrow \pi }&{}&{ \downarrow \pi }\\
			Y&{\mathop  \to \limits^{{\psi _t}} }&Y
		\end{array}$.	
	\end{center}
\end{definition}

\begin{definition}
	Let $\phi :X \times {\rm{R}} \to X$ be a flow on the compact metric space $X$. Let $X$ be decomposed into three disjoint $\phi$-invariant sets  $A$ , $B$ and $N$ such that $\mu \left( A \right) > 0$ and $\mu \left( N \right) = 0$. Let $\theta \left( {x,t} \right)$ be a real measurable function defined on $\left( {X\backslash N} \right) \times \left( { - \infty , + \infty } \right)$ with the following propertier: for every fixed $x \in A$ ,
	
	(1) $\theta \left( {x,t} \right)$ is finite for all $t \in {\rm{R}}$;
	
	(2) $\theta \left( {x,t + s} \right) = \theta \left( {x,t} \right) + \theta \left( {\phi \left( {x,t} \right),s} \right)$ for all $t,s \in {\rm{R}}$;
	
	(3) $\theta \left( {x,t} \right)$ is continuous and nondescreasing in $t$;
	
	(4)  $\theta \left( {x,0} \right) = 0$,
	$\mathop {\lim }\limits_{t \to  + \infty } \theta \left( {x,t} \right) =  + \infty $,
	$\mathop {\lim }\limits_{t \to  - \infty } \theta \left( {x,t} \right) =  - \infty $;
	
	(5) $\theta \left( {x,0} \right) = 0$, for all $x \in B$, $t \in {\rm{R}}$.\\
	$\theta \left( {x,t} \right)$ is called an additive function of $\phi$ with the set $A$.
\end{definition}

\begin{definition}
	Let $\phi$ be a flow on Borel probability space $\left( {X,{\mathscr B},\mu } \right)$ , and let $\theta \left( {x,t} \right)$ be an additive function of $\phi$ with the set $A$.
	
	Define ${\hat \phi _t}x = {\phi _{\tau \left( {x,t} \right)}}x$, where $\tau \left( {x,t} \right) = \sup \left\{ {s|\theta \left( {x,s} \right) \le t} \right\},\ \ \forall x \in A, \ \forall t \in{\rm{R}} $.
	
	Define the regular set $\hat X$ of the additive function $\theta \left( {x,t} \right)$, $\hat X = \left\{ {x \in A|\theta \left( {x,t} \right) > 0,\forall t > 0} \right\}$.
\end{definition}

\begin{theorem} \rm{\cite{2008TIME}} \label{thm:change}
	Let $\hat {\mathscr B} = {\mathscr B}\cap \hat X$ . Then $\hat \phi :\hat X \times {\rm{R}} \to \hat X$ is a flow, called as the time-changed flow of $\phi$ with respect to $\theta \left( {x,t} \right)$ .
\end{theorem}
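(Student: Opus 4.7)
The plan is to verify that $\hat\phi_t x := \phi_{\tau(x,t)} x$ satisfies the two defining axioms of a flow on $\hat X$, namely $\hat\phi_0 = \mathrm{id}$ and $\hat\phi_{t+s} = \hat\phi_t \circ \hat\phi_s$, together with joint continuity in $(x,t)$.

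First I would establish that, for each fixed $x \in \hat X$, the function $t \mapsto \theta(x,t)$ is a homeomorphism of $\R$ onto itself. Continuity and surjectivity follow from properties (3) and (4). Strict monotonicity reduces, via the cocycle
\begin{equation*}
\theta(x, s_2) - \theta(x, s_1) = \theta(\phi_{s_1} x,\, s_2 - s_1) \qquad (s_1 < s_2),
\end{equation*}
to showing that $\phi_{s_1} x$ again lies in $\hat X$. Hence a key intermediate step is the $\phi$-invariance of $\hat X$: I would argue by contradiction that if $\theta(\phi_s x, r) = 0$ for some $r > 0$, then (2) together with (3) forces $\theta(x, \cdot)$ to be constant on an interval of positive length, and iterating the cocycle propagates this constancy back to an interval of the form $(0, \delta)$, contradicting the defining condition $\theta(x, t) > 0$ for all $t > 0$.

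Once $\theta(x,\cdot)$ is a homeomorphism of $\R$, $\tau(x,\cdot)$ is its continuous, strictly increasing inverse --- the supremum definition of $\tau$ agrees with the inverse thanks to continuity and monotonicity, and well-definedness of the supremum in $\R$ is ensured by the two limits in (4). From $\theta(x,0) = 0$ we read off $\tau(x, 0) = 0$, so $\hat\phi_0 x = \phi_0 x = x$. For the cocycle of $\hat\phi$, I would prove the dual identity
\begin{equation*}
\tau(x, t+s) = \tau(x, t) + \tau\!\bigl(\phi_{\tau(x,t)} x,\, s\bigr)
\end{equation*}
by applying $\theta(x, \cdot)$ to both sides and invoking (2); this then yields
\begin{equation*}
\hat\phi_s(\hat\phi_t x) = \phi_{\tau(\phi_{\tau(x,t)} x,\, s)}\bigl(\phi_{\tau(x,t)} x\bigr) = \phi_{\tau(x, t+s)} x = \hat\phi_{t+s} x.
\end{equation*}
Joint continuity of $\hat\phi$ on $\hat X \times \R$ then follows by composing continuity of $\phi$ with the continuity of $\tau$.

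The main obstacle I foresee is precisely the $\phi$-invariance of $\hat X$, equivalently the strict monotonicity of $\theta(x,\cdot)$ for $x \in \hat X$: axioms (1)--(5) only guarantee strict positivity of $\theta(x,\cdot)$ immediately to the right of $0$, and one has to bootstrap this to all of $\R$ using the cocycle. All subsequent steps are essentially routine once this invariance is in hand.
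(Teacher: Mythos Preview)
The paper does not supply its own proof of this statement; it is quoted from the cited reference without argument, so there is no in-paper proof to compare your proposal against.

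That said, your plan contains a genuine gap. Your key intermediate step --- the $\phi$-invariance of $\hat X$, equivalently strict monotonicity of $\theta(x,\cdot)$ for every $x\in\hat X$ --- is \emph{false} under axioms (1)--(5) alone. Take $\phi_t x = x+t$ on the line and $\theta(x,t)=\int_0^t g(\phi_s x)\,ds$ with $g\ge 0$ continuous, $g\equiv 0$ on $[1,2]$ and $g>0$ elsewhere: then $0\in\hat X$ (since $\theta(0,t)=t>0$ for $0<t\le 1$) but $\phi_1(0)=1\notin\hat X$ (since $\theta(1,t)=0$ for $0<t\le 1$). The ``propagation of constancy back to $(0,\delta)$'' you invoke does not follow from the cocycle: constancy of $\theta(x,\cdot)$ on an interval $[s,s+r]$ with $s>0$ is perfectly compatible with $\theta(x,t)>0$ for all $t>0$.

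The correct route avoids $\phi$-invariance of $\hat X$ altogether and exploits the supremum in the definition of $\tau$. Continuity and monotonicity give $\theta(x,\tau(x,t))=t$, while the definition of the supremum forces $\theta(x,\tau(x,t)+r)>t$ for every $r>0$; the cocycle (2) then yields
\[
\theta\bigl(\phi_{\tau(x,t)}x,\,r\bigr)=\theta\bigl(x,\tau(x,t)+r\bigr)-\theta\bigl(x,\tau(x,t)\bigr)>0,
\]
so $\hat\phi_t x\in\hat X$ directly. The cocycle identity for $\tau$ is then obtained by the same device, without ever assuming that $\theta(x,\cdot)$ is injective. Finally, since $\theta$ is only assumed measurable in $x$, your joint-continuity claim overreaches: in this generality $\hat\phi$ is a measurable flow, not a topological one.
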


\begin{theorem}\rm{\cite{1980A}}
	Let $\phi$ and $\psi$ be two topological equivalent flows on the compact metric space $X$. Let $\pi$ be a perserving time orientation homeomorphism such that $\pi \left( {Orb\left( {x,\phi } \right)} \right) = Orb\left( {\pi x,\psi } \right)$ for all $x \in X$. Let ${X_0}$ be a set containing all fixed points of  $\phi$. Then there exists $\theta :X\backslash {X_0} \times {\rm{R}} \to {\rm{R}}$, such that
	
	(1) $\theta \left( {x,0} \right) = 0$, $\theta \left( {x, \cdot } \right)$ is strictly increasing for all $x \in X\backslash {X_0}$;
	
	(2) $\theta \left( {x,t + s} \right) = \theta \left( {x,t} \right) + \theta \left( {\phi \left( {x,t} \right),s} \right)$, for all $x \in X\backslash {X_0}$, $s,t \in {\rm{R}}$;
	
	(3) $\pi  \circ {\phi _t}\left( x \right) = {\psi _{\theta \left( {x,t} \right)}} \circ \pi \left( x \right)$, for all $x \in X\backslash {X_0}$, $t \in {\rm{R}}$.
\end{theorem}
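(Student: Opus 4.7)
The plan is to define $\theta(x,t)$ directly by inverting the $\psi$-flow along the image orbit. For $x \in X \setminus X_0$ and $t \in \mathbb{R}$, I would let $\theta(x,t)$ be the unique real number with $\psi_{\theta(x,t)}(\pi(x)) = \pi(\phi_t(x))$ that continuously tracks the normalization $\theta(x,0)=0$ as $t$ varies. The key input for well-definedness is that $x$ non-fixed implies $\pi(x)$ non-fixed, since the orbit bijection induced by $\pi$ cannot send a non-degenerate orbit to a singleton; hence $s \mapsto \psi_s(\pi(x))$ is either globally injective or periodic with some minimal period $T' > 0$. In the periodic case the defining equation has countably many solutions differing by multiples of $T'$, and continuity of $t \mapsto \pi(\phi_t(x))$ lets me pick the continuous branch through zero.

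With $\theta$ so defined, property~(3) holds by construction and the initial value in~(1) is built in. Strict monotonicity in~(1) follows from the orientation-preserving hypothesis on $\pi$: for $t_1<t_2$, the point $\phi_{t_2}(x)$ is strictly forward of $\phi_{t_1}(x)$ along $Orb(x,\phi)$, so the images occur in the same forward order along $Orb(\pi(x),\psi)$, forcing $\theta(x,t_1)<\theta(x,t_2)$. For the cocycle identity~(2), I would compute $\pi\circ\phi_{t+s}(x)$ two ways: once via~(3) applied at $(x,t+s)$, and once via the flow property $\phi_{t+s}=\phi_s\circ\phi_t$ together with~(3) applied at $(x,t)$ and then at $(\phi_t(x),s)$, which yields $\psi_{\theta(x,t)+\theta(\phi_t(x),s)}(\pi(x))$. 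Uniqueness of the continuous lift then forces $\theta(x,t+s)=\theta(x,t)+\theta(\phi_t(x),s)$.

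The main obstacle I expect is the periodic-orbit case, where uniqueness of the time-shift fails globally and one must verify that the chosen monotone continuous branch is consistent with the cocycle relation. This is precisely where the orientation-preserving hypothesis does its work: it forces the lift to be monotone, and hence unique among continuous branches normalized at zero. A minor issue is the continuity of $t \mapsto \pi(\phi_t(x))$ needed to select the branch, but this follows at once from joint continuity of $\phi$ and continuity of $\pi$.
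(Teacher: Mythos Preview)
The paper does not give its own proof of this theorem: it is stated with the citation \cite{1980A} and immediately used, so there is no argument in the paper to compare against. Your proposal is the standard construction and is correct in outline; this is essentially how Ohno's original argument proceeds. One small point worth tightening: in the periodic case you should also note that $Orb(x,\phi)$ is periodic if and only if $Orb(\pi(x),\psi)$ is (since $\pi$ is a homeomorphism sending one orbit onto the other, and a periodic orbit is compact while a non-periodic non-fixed orbit is not), so the branch-selection issue arises precisely when both sides are circles and the lifting argument goes through cleanly.
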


According to Theorem 2.8, $\theta $ is an additive function. The time-changed flow of $\phi $ with respect to $\theta $ is ${\hat \phi _t} = {\pi ^ - } \circ {\psi _t} \circ \pi $.

\section{Entropy degeneracy and entropy explosion of flows}
The specific construction process of the Ohno example is given in section 3.1, and the main theorems and their proofs in this chapter are given in section 3.2.

\subsection{Construction of Ohno example}

For the convenience of this article, we will review Ohno's construction here, which can be found in the references\cite{1980A}.

Give a discrete topology to the set $\left\{ {0,1} \right\}$.  Note $\sum \left( 2 \right) = \mathop \prod \limits_{ - \infty }^{ + \infty } {\left\{ {0,1} \right\}^{\rm{Z}}}$ and give the product topology. Considering the shifting homeomorphism $T:\sum \left( 2 \right) \to \sum \left( 2 \right)$, i.e. $\left( {{x_i}} \right) \to \left( {{x_{i + 1}}} \right)$ .

Step 1, Selecting a bidirectional infinite sequence ${x^ * } = \left( {{x^ * }\left( i \right)} \right) \in \sum \left( 2 \right)$, which satisfying the following proportion:

(1) For all $k \in {\rm{Z}}$ and $n \ge 1$, ${x^ * }\left( k \right){x^ * }\left( {k + 1} \right) \cdots {x^ * }\left( {k + 4 \cdot {3^n}} \right) \succ {I_n} = \underbrace {11 \cdots 1}_{2n - 1}$, i.e. the sequence ${x^ * }\left( k \right){x^ * }\left( {k + 1} \right) \cdots {x^ * }\left( {k + 4 \cdot {3^n}} \right)$ contains a subsequence which are $2n - 1$ consecutive 1's.

(2) For all $n \ge 2$, the sequence ${x^ * }$ contains ${2^{{p_n}}}$ finite subsequences of length $2 \cdot {3^{n - 1}}$. Here, $p{}_n = \frac{{{3^{n - 1}} + 1}}{2}$.

Let $\alpha $ be a symbol which will be replaced by 0 or 1. Define ${x_1} = 1\alpha $ and ${\tilde x_1} = 11$. We change one $\alpha $ in ${x_1}$ to 1. Define ${x_2} = {x_1}{\tilde x_1}{x_1} = 1\alpha 111\alpha $. We get ${\tilde x_2}$ by changing one $\alpha $ in ${x_2}$ to 1 with ${\tilde x_2} \succ {I_2}$. That is the sequence ${\tilde x_2}$ contains a subsequence which are $2 \cdot 2 - 1 = 3$ consecutive 1's. For example, ${\tilde x_2} = 11111\alpha $. Define ${x_3} = {x_2}{\tilde x_2}{x_2}$. We get ${\tilde x_3}$ by changing one $\alpha $ in ${x_3}$ to 1 with ${\tilde x_3} \succ {I_3}$. In general, put ${x_n} = {x_{n - 1}}{\tilde x_{n - 1}}{x_{n - 1}}$. Since each nontrailing $\alpha $of the sequence ${x_n}$ and ${\tilde x_{n-1}} \succ {I_{n-1}}$ have 1 on both sides and one of the $\alpha $in ${x_n}$can be replaced by 1, we get ${\tilde x_n}$ with ${\tilde x_n} \succ {I_n}$. Using induction, we obtain $\left\{ {{x_n}} \right\}_1^\infty $. For all $n \ge 1$, ${x_n}$ and ${\tilde x_n}$ are the length of $2 \cdot {3^{n - 1}}$. And the sequence ${\tilde x_{n + 1}}$ before $2 \cdot {3^{n - 1}}$ on the position is ${x_n}$. So the following limit exists:${x^ + } = \mathop {\lim }\limits_{n \to \infty } {x_n}$. Then ${x^ + } \in {\left\{ {1,\alpha } \right\}^{\rm{N}}}$.

As is known by the constructor, every subsequence of ${x^ + }$ with the length of $4 \cdot {3^n} = 2 \cdot 2 \cdot {3^{n - 1}} \cdot 3$ either contains ${\tilde x_n}$, which in turn contains ${I_n}$; or contains a modified ${\tilde x_n}$, that modifies several $\alpha $ in ${\tilde x_n}$ to 1 such that contains ${I_n}$. Therefore, for all $k \ge 1$ ,$n \ge 2$ , we have 
\begin{center}
	${x^ + }\left( k \right){x^ + }\left( {k + 1} \right) \cdots {x^ + }\left( {k + 4 \cdot {3^n}} \right) \succ {I_n}$.
\end{center}

On the other hand, the number of $\alpha $ in ${x_1} = 1\alpha $ can be written as $1 = \frac{{{3^{1 - 1}} + 1}}{2} = {p_1}$; the number of $\alpha $ in ${x_2} = 1\alpha 111\alpha $ can be written as $2 = \frac{{{3^{2 - 1}} + 1}}{2} = {p_2}$. Let the number of $\alpha $ in ${x_{n - 1}}$ be written as $\frac{{{3^{n - 2}} + 1}}{2} = {p_{n - 1}}$. Noting ${x_n} = {x_{n - 1}}{\tilde x_{n - 1}}{x_{n - 1}}$, then the number of $\alpha $ in ${x_n}$ is
\begin{center}
	$\frac{{{3^{n - 2}} + 1}}{2} + \left( {\frac{{{3^{n - 2}} + 1}}{2} - 1} \right) + \frac{{{3^{n - 2}} + 1}}{2} = \frac{{{3^{n - 1}} + 1}}{2} = {p_n}$.
\end{center}
Replacing $\alpha $ by 0 or 1 in ${x_n}$, we obtain ${2^{p{}^n}}$ different ${x_n}$ with the length of $2 \cdot {3^{n - 1}}$. Accordingly, we make the following amendment to ${x^ + }$. \\
Starting from the left end of ${x^ + }$, we pick two ${x_1}$ because there are infinite ${x_1}$ in ${x^ + }$. Changing one $\alpha $ in ${x_1}$ to 0, one $\alpha $  to 1, we obtain ${2^{{p_1}}} = 2$ different ${x_1}$ with the length of 2. Take the tail end of the rightmost ${x_1}$, and replace the remaining the left side  $\alpha $ to 1.
Starting from the left end of ${x^ + }$, we pick ${2^{{p_2}}} = 4$ ${x_2}$ with the length of $2 \cdot {3^{2 - 1}} = 6$. 
Changing these $\alpha $ in ${x_2}$ to 0 or 1, we get four different subsequence with the length of 6.  Repeating this procedure infinitely gives the corrected unilateral infinite sequence, still denoted as ${x^ + }$. We point out that the corrected ${x^ + }$ belongs to ${\left\{ {0,1} \right\}^{\rm{N}}} = \mathop \prod \limits_0^\infty  \left\{ {0,1} \right\}$. ${x^ + }$ satisfies the conditions (1) and (2) proposed above.

Define
\begin{center}
	${x^ * }\left( k \right) = \left\{ {\begin{array}{*{20}{c}}
			{{x^ + }\left( k \right),}\\
			{0,}\\
			{{x^ + }\left( { - k} \right),}
		\end{array}\begin{array}{*{20}{c}}
			{k \ge 1}\\
			{k = 0}\\
			{k \le  - 1}
	\end{array}} \right.$.
\end{center}
Then ${x^ * }\left( k \right) \in \sum \left( 2 \right)$  and satisfies the conditions (1) and (2).

Step 2, Constructing a subsystem of symbology and discussing its topological entropy.

Let $X = \overline {Orb\left( {{x^ * },T} \right)} $. Then $X$ is the closure of ${x^ * }$ under $T$ iteration, and is compact $T$-invariant subset. $T:X \to X$ constitutes a subsystem of symbology, $E\left( {X,T} \right) \ne \emptyset $, where $E\left({X,T}\right)$ represents the set of all ergodic measures of $T$.

$n \ge 1$, let
\begin{center}
	${\hat I_n} = \left\{ {x \in X|x\left( k \right) = 1, - n + 1 \le k \le n - 1} \right\}$.
\end{center}
${\chi _{{{\hat I}_n}}}$ denote the feature functions of ${\hat I_n}$.

Take $\mu  \in E\left( {X,T} \right)$. According to the Birkhoff's ergodic theorem, for  $\mu  - a.e.x \in X$ there exists 
\begin{center}
	$\mu \left( {{{\hat I}_n}} \right) = \int_X {{\chi _{{{\hat I}_n}}}} d\mu  = \mathop {\lim }\limits_{m \to \infty } \frac{1}{m}\sum\limits_{i = 0}^{m - 1} {{\chi _{{{\hat I}_n}}}\left( {{T^i}x} \right)} $.
\end{center}

Since $x$ is the aggregation point of $\left\{ {{T^n}{x^ * }|n \in {\rm{Z}}} \right\}$, it is known that $x$ also satisfies the condition (1) of step 1 according to the topology of $\sum \left( 2 \right)$. It follows 
\begin{center}
	$\mu \left( {{{\hat I}_n}} \right) \ge \frac{1}{{4 \cdot {3^n}}}$.
\end{center}
On the other hand, according to the condition (2) by step 1 we have
\begin{center}
	$h\left( T \right) = \mathop {\lim }\limits_{n \to \infty } \frac{1}{n}\ln {\theta _n}\left( X \right) \ge \mathop {\lim }\limits_{n \to \infty } \frac{1}{{2 \cdot {3^{n - 1}}}}\ln {2^{{p_n}}} = \frac{1}{4}\ln 2 > 0$,
\end{center}
where ${\theta _n}\left( X \right) = \# \left\{ {\left[ {{i_0} \cdots {i_{n - 1}}} \right]|\exists x \in X,s.t.{x_0} = {i_0}, \cdots ,{x_{n - 1}} = {i_{n - 1}}} \right\}$.

Step 3, Suspending $\left( {X,T} \right)$ into a flow.

The discrete system $\left( {X,T} \right)$ has a unique fixed point (denoted ) $1 \in X$, which is a bidirectional infinite sequence with each position 1. Let ${X_ * } = X\backslash \left\{ 1 \right\}$. Then ${X_ * }$ is a locally compact space. Let $\gamma :{X_ * } \to \left( {0, + \infty } \right)$  be a positive continuous function, which is called a roof function. Then $\left( {{X_ * },T} \right)$ can be suspended with $\gamma $ to get a flow. Specifically, establish an equivalence relationship on $\left\{ {\left( {x,u} \right)|0 \le u \le \gamma \left( x \right),x \in {X_ * }} \right\}$ 
\begin{center}
	$\left( {x,\gamma \left( x \right)} \right) \sim \left( {Tx,0} \right)$.
\end{center}
We obtain the quotient space $X_ * ^\gamma $. The flow ${\varphi ^\gamma }:X_ * ^\gamma  \times {\rm{R}} \to X_ * ^\gamma $ is defined as
\begin{center}
	$\varphi _t^\gamma \left( {x,u} \right) = \left( {x,u + t} \right),  - u \le t < \gamma \left( x \right) - u$.
\end{center}
In this case, $X_ * ^\gamma $ is the metric space of local compactness and $\varphi _t^\gamma $ is the flow defined on the space $X_ * ^\gamma $.

Let ${X^\gamma } = X_ * ^\gamma  \cup \left\{ \Delta  \right\}$ is the single-point compactification of $X_ * ^\gamma $. Then ${X^\gamma }$ is the compactness metric space. And $\left( {{x_n},{u_n}} \right) \to \Delta $ on the space ${X^\gamma }$ if and only if there is ${x_n} \to 1$ on the space $X$. Put $\varphi _t^\gamma \left( \Delta  \right) = \Delta $. For all $t \in {\rm{R}}$, we expand the flow $\varphi _t^\gamma $ on $X_ * ^\gamma $ to the flow on  ${X^\gamma }$, and denote it ${\Phi ^\gamma }$.

\begin{lemma}\label{lem:one}
	For any two positive continuous $\gamma $ and $\gamma '$, ${\Phi ^\gamma }$ and ${\Phi ^{\gamma '}}$ are topological equivalent to each other.
\end{lemma}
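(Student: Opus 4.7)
The plan is to build an explicit orbit-preserving homeomorphism $\pi\colon X^{\gamma}\to X^{\gamma'}$ by rescaling the fibre coordinate fibrewise, using the ratio $\gamma'(x)/\gamma(x)$. Concretely, for $(x,u)\in X_{*}^{\gamma}$ with $x\in X_{*}$ and $0\le u\le\gamma(x)$, set
\begin{equation*}
\pi(x,u)=\Bigl(x,\;u\,\tfrac{\gamma'(x)}{\gamma(x)}\Bigr),
\end{equation*}
and extend by $\pi(\Delta)=\Delta'$, where $\Delta'$ is the point added to single-point-compactify $X_{*}^{\gamma'}$. I would carry out the argument in four steps: well-definedness on the quotient, continuity (the main obstacle, which splits into an interior part and a part at the added point), bijectivity by exhibiting an inverse, and the orbit/time-orientation property.

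For well-definedness, the only issue is the identification $(x,\gamma(x))\sim(Tx,0)$ in $X^{\gamma}$. Under $\pi$, the left-hand side goes to $(x,\gamma'(x))$ while the right-hand side goes to $(Tx,0)$, and these are identified in $X^{\gamma'}$. Bijectivity is clear, with inverse $\pi^{-1}(y,v)=(y,v\,\gamma(y)/\gamma'(y))$ on $X_{*}^{\gamma'}$ and $\pi^{-1}(\Delta')=\Delta$; the symmetric formula shows $\pi$ and $\pi^{-1}$ have the same form, so it suffices to check continuity of $\pi$.

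Continuity on $X_{*}^{\gamma}$ is the technical core. Away from the seam $\{u=\gamma(x)\}$, the map is continuous because $\gamma,\gamma'$ are continuous and strictly positive on $X_{*}$; at the seam, one has to verify that sequences $(x_{n},u_{n})$ with $u_{n}\uparrow\gamma(x_{n})$ and $(x_{n}',u_{n}')$ with $u_{n}'\downarrow 0$ and $x_{n}'=Tx_{n}$ map to sequences that also agree in the limit in $X^{\gamma'}$, and here the identification $(x,\gamma'(x))\sim(Tx,0)$ again does the job. The subtle point is continuity at $\Delta$: by the description of convergence in $X^{\gamma}$, $(x_{n},u_{n})\to\Delta$ means exactly $x_{n}\to\mathbf{1}$ in $X$, and since the image $\pi(x_{n},u_{n})$ has first coordinate $x_{n}$, the same criterion in $X^{\gamma'}$ forces $\pi(x_{n},u_{n})\to\Delta'$, regardless of how $\gamma'(x_{n})/\gamma(x_{n})$ behaves. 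This is where the single-point compactification pays off and the proof avoids having to control the ratio near the fixed point.

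For the orbit and time-orientation property, note that within one fundamental domain the $\Phi^{\gamma}$-orbit of $(x,0)$ parameterises $\{(x,u):0\le u<\gamma(x)\}$ by $u=t$, and its $\pi$-image parameterises the $\Phi^{\gamma'}$-orbit segment $\{(x,v):0\le v<\gamma'(x)\}$ by $v=t\,\gamma'(x)/\gamma(x)$, which is a strictly increasing reparameterisation; across seams this extends consistently because $\pi$ respects the identifications, and at $\Delta$ both flows fix the added point. Hence $\pi$ carries $\Phi^{\gamma}$-orbits to $\Phi^{\gamma'}$-orbits preserving their time direction, and the lemma follows. I expect the only genuinely delicate step to be the continuity verification along the seam, which relies on the quotient identification rather than any estimate on $\gamma,\gamma'$.
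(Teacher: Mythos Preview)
The paper does not supply its own proof of this lemma: it is stated as part of the review of Ohno's construction and attributed to \cite{1980A}, so there is nothing to compare against directly. Your argument is the standard one and is correct; indeed, the paper itself uses exactly your fibrewise rescaling map $\pi(x,u)=(x,u\cdot a(x))$ later in the proof of Lemma~3.4 (there for the special case $\gamma\equiv 1$, $\gamma'=a$), which confirms that your approach is the intended one in this setting.
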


\begin{lemma}
	Assume ${\gamma _0} = \mathop {\inf }\limits_{x \in {X^ * }} \gamma \left( x \right) > 0$. For all non-trivial (not atomic measure of $\Delta $) $\bar \mu  \in {{\rm M}_{erg,{\Phi ^\gamma }}}$, there exists an non-trivial measure $\mu  \in E\left( {X,T} \right)$ (not atomic measure of $\left\{ 1 \right\}$ ), such that for all continuous function $f$ on ${X^\gamma }$,
	\begin{center}
		${E_{\bar \mu }}\left( f \right) = \frac{1}{{{E_\mu }\left( \gamma  \right)}}{E_\mu }\left( {\int_0^{\gamma \left( x \right)} {f\left( {x,t} \right)dt} } \right)$,
	\end{center}
	where
	\begin{center}
		${E_{\bar \mu }}\left( f \right) = \int {fd\bar \mu ,{E_\mu }\left( \gamma  \right) = \int {\gamma d\mu } } $,\\
		${E_\mu }\left( {\int_0^{\gamma \left( x \right)} {f\left( {x,t} \right)dt} } \right) = \int {\left( {\int_0^{\gamma \left( x \right)} {f\left( {x,t} \right)dt} } \right)d\mu } $.
	\end{center}
	
\end{lemma}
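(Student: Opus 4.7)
The plan is to exploit the product-like local structure of the suspension: on the quotient space $X_*^\gamma$, the flow is locally just translation $(x,u) \mapsto (x, u+t)$, so any $\Phi^\gamma$-invariant Borel measure should be, on each flow box, the product of a measure on the base with one-dimensional Lebesgue measure. Carrying this through gives a bijection between non-trivial $\Phi^\gamma$-ergodic probabilities on $X^\gamma$ and non-trivial $T$-ergodic probabilities on $X$, and the asserted formula will drop out of the bookkeeping of this correspondence.

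First I will dispose of the compactification point $\Delta$: the set $\{\Delta\}$ is itself $\Phi^\gamma$-invariant, so by ergodicity $\bar\mu(\{\Delta\}) \in \{0,1\}$, and non-triviality forces the value $0$. Hence $\bar\mu$ is concentrated on $X_*^\gamma$. I then push $\bar\mu$ forward under the projection $\pi_1 : (x,u) \mapsto x$ to obtain a finite Borel measure $\tilde\mu$ on $X_*$ and disintegrate $\bar\mu = \int \bar\mu_x \, d\tilde\mu(x)$, with $\bar\mu_x$ supported on the fiber $[0, \gamma(x)]$. Testing invariance against continuous functions of the separated form $g(x)h(u)$ supported in a single flow box, and using that $\Phi^\gamma_t$ is translation inside each fiber, will force $\bar\mu_x$ to be normalized Lebesgue measure on its fiber for $\tilde\mu$-a.e.\ $x$. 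Setting $d\mu_0(x) := \gamma(x)^{-1}\, d\tilde\mu(x)$ will therefore yield
\[
\int f \, d\bar\mu \;=\; \int_{X_*} \int_0^{\gamma(x)} f(x,u) \, du \, d\mu_0(x), \qquad f \in C(X^\gamma),
\]
and the gluing $(x, \gamma(x)) \sim (Tx, 0)$ will translate $\Phi^\gamma$-invariance of $\bar\mu$ into $T$-invariance of $\mu_0$.

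Finally I will normalize and transfer ergodicity. Taking $f \equiv 1$ above gives $1 = E_{\mu_0}(\gamma)$, and because $\gamma \ge \gamma_0 > 0$, the measure $\mu_0$ is finite with $0 < \mu_0(X_*) \le 1/\gamma_0$; set $\mu := \mu_0/\mu_0(X_*)$. Then $\mu_0(X_*) = 1/E_\mu(\gamma)$, and substitution delivers exactly the claimed formula. Ergodicity of $\mu$ under $T$ follows by lifting a $T$-invariant set $B \subset X_*$ to the $\Phi^\gamma$-invariant set $\{(x,u) \in X_*^\gamma : x \in B\}$ and invoking ergodicity of $\bar\mu$; non-atomicity of $\mu$ at the fixed point $\{1\}$ transfers likewise from $\bar\mu(\{\Delta\}) = 0$. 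The main technical hurdle is the disintegration step: one must justify simultaneously-on-almost-every-fiber uniformity of the conditionals $\bar\mu_x$ and convert fiber-by-fiber invariance into global $T$-invariance across the identification $(x,\gamma(x))\sim(Tx,0)$. I plan to handle this by approximating arbitrary $f \in C(X^\gamma)$ by finite sums of flow-box separated functions, applying the standard disintegration theorem on the Polish space $X_*$, and using the continuity of $\gamma$ to patch the resulting local formulas across boxes.
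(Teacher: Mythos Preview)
Your approach is correct and is the standard disintegration/cross-section argument for the correspondence between invariant measures of a suspension flow and invariant measures of its base transformation. There are no genuine gaps: the reduction to $\bar\mu(\{\Delta\})=0$, the fiberwise Lebesgue conclusion from translation invariance, the normalization $\mu_0(X_*)=1/E_\mu(\gamma)$, and the lift of $T$-invariant sets to $\Phi^\gamma$-invariant sets for ergodicity all go through as you indicate. The only point deserving a line of extra care is the passage from fiberwise translation invariance to $T$-invariance of $\mu_0$ across the identification $(x,\gamma(x))\sim(Tx,0)$; this is most cleanly done by viewing $X_*\times\{0\}$ as a global cross-section with first-return map $T$ and return time $\gamma$, rather than patching flow boxes, but your plan will also work.

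As for comparison with the paper: the paper does not prove this lemma at all. It is stated as part of the recap of Ohno's construction, and the parallel statement later (Lemma~4.4) is disposed of with the single sentence ``Following the reference \cite{1980A, R1987Ergodic}, it is easy to prove the conclusion.'' So you are supplying substantially more than the paper does. Your disintegration argument is exactly what lies behind those references (the Ambrose--Kakutani/Abramov correspondence in Walters' book), and is the right way to fill in the citation.
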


\begin{theorem}\label{thm:two}
	If for all non-trivial $\mu  \in E\left( {X,T} \right)$, ${E_\mu }\left( \gamma  \right) =  + \infty $. Then ${\delta _\Delta }$ is the only invariant ergodic measure of ${\Phi ^\gamma }$. That is, ${{\rm M}_{erg,{\Phi ^\gamma }}} = \left\{ {{\delta _\Delta }} \right\}$ .
\end{theorem}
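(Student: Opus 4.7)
The goal is to show that under the hypothesis — every non-trivial $\mu\in E(X,T)$ has $E_\mu(\gamma)=+\infty$ — the Dirac mass $\delta_\Delta$ is the unique member of ${\rm M}_{erg,\Phi^\gamma}$. Since $\Delta$ is a fixed point of $\Phi^\gamma$, $\delta_\Delta$ is automatically ergodic and invariant, so the content of the theorem is that no other such measure exists.

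The plan is to argue by contradiction, funnelling everything through the previous lemma. Suppose $\bar\mu\in{\rm M}_{erg,\Phi^\gamma}$ is non-trivial, i.e.\ $\bar\mu\neq\delta_\Delta$. Lemma 3.2 then produces a non-trivial $\mu\in E(X,T)$ together with the integration formula
\[
E_{\bar\mu}(f)\;=\;\dfrac{1}{E_\mu(\gamma)}\,E_\mu\!\left(\int_0^{\gamma(x)}f(x,t)\,dt\right)
\]
for every continuous $f$ on $X^\gamma$. The hypothesis forces $E_\mu(\gamma)=+\infty$, so the entire strategy becomes: control the numerator and conclude $E_{\bar\mu}(f)=0$ on a sufficiently rich class of test functions.

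The control comes from restricting attention to continuous $f$ supported away from $\Delta$ — equivalently, functions on the compact space $X^\gamma$ whose support lies in a compact subset of $X_*^\gamma$. For such $f$ there is an open neighbourhood $U$ of the unique fixed point $1\in X$ such that $f(x,t)=0$ whenever $x\in U$; on the compact complement $X\setminus U$ the continuous roof $\gamma$ is bounded above by some $M<\infty$, so
\[
\left|\int_0^{\gamma(x)}f(x,t)\,dt\right|\;\leq\;M\,\|f\|_\infty
\]
uniformly in $x\in X$. The numerator is therefore finite, division by $+\infty$ yields $E_{\bar\mu}(f)=0$, and since this holds for every continuous $f$ compactly supported in $X_*^\gamma$, the Riesz representation theorem on the locally compact Hausdorff space $X_*^\gamma$ forces $\bar\mu(X_*^\gamma)=0$. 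Hence $\bar\mu=\delta_\Delta$, contradicting the non-triviality assumption.

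The only mildly delicate step I foresee is checking that Lemma 3.2 is legitimately applicable to such $f$ — which amounts to the observation that a continuous function on $X^\gamma$ which is compactly supported in $X_*^\gamma$ is automatically continuous on the one-point compactification when extended by $0$ at $\Delta$ — and that the Riesz passage from vanishing of $E_{\bar\mu}(f)$ on this class to $\bar\mu(X_*^\gamma)=0$ is truly standard. The genuine mathematical content of the argument is simply that a finite quantity divided by $+\infty$ is $0$.
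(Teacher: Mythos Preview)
The paper does not actually supply a proof of this theorem: it is stated in Section~3.1 as part of the recapitulation of Ohno's construction, and is meant to be read as an immediate consequence of the preceding Lemma~3.2 (both imported from \cite{1980A}). Your argument is precisely the natural deduction one makes from that lemma --- assume a non-trivial $\bar\mu$, extract the base measure $\mu$ via Lemma~3.2, and observe that $E_\mu(\gamma)=+\infty$ forces the representation formula to collapse --- so there is nothing to compare against and your proof is correct.

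Two minor remarks. First, your appeal to Lemma~3.2 silently imports its standing hypothesis $\gamma_0=\inf_{x\in X_*}\gamma(x)>0$, which is not restated in Theorem~\ref{thm:two}; this is harmless in the paper's context since the only roof function to which the theorem is ever applied satisfies $\gamma\ge 1$, but it is worth flagging. Second, the phrase ``division by $+\infty$'' is a little informal: once $E_\mu(\gamma)=+\infty$, the right-hand side of Lemma~3.2's identity is already undefined as written, and one could simply declare a contradiction at that point. Your detour through compactly supported test functions and the Riesz theorem is a perfectly clean way to make the same conclusion rigorous, and the bound $\bigl|\int_0^{\gamma(x)}f(x,t)\,dt\bigr|\le M\|f\|_\infty$ for $f$ supported over $X\setminus U$ is exactly the right estimate.
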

Combining the above lemmas and inferences, Ohno determined that there are two flows, one with positive entropy and one with 0 entropy. Take
\begin{center}
	$\gamma \left( x \right) = \left\{ {\begin{array}{*{20}{c}}
			{n \cdot 4 \cdot {3^n},}\\
			{1,}
		\end{array}\begin{array}{*{20}{c}}
			{x \in {{\hat I}_n}\backslash {{\hat I}_{n + 1}},n \ge 1,}\\
			{x \in {X_ * }\backslash {{\hat I}_1}}
	\end{array}} \right.$, and $\gamma '\left( x \right) = 1$.
\end{center}
Easy to prove $\gamma \left( x \right)$ continuous.  By Lemma 3.1, $\left( {{X^\gamma },{\Phi ^\gamma }} \right)$ and $\left( {{X^{\gamma '}},{\Phi ^{\gamma '}}} \right)$ are topologically equivalent to each other.
On the one hand, there exists $h\left( {{\Phi ^{\gamma '}}} \right) = h\left( T \right) > 0$.
On the other hand, for all non-trivial measure $\mu  \in E\left( {X,T} \right)$, it holds that 
\begin{center}
	${E_\mu }\left( \gamma  \right) > \int_{{{\hat I}_n}} {n \cdot 4 \cdot {3^n}d\mu }  > n \cdot 4 \cdot {3^n}\mu \left( {{{\hat I}_n}} \right) > n, \forall n \in {\rm{N}}$.
\end{center}
So, ${E_\mu }\left( \gamma  \right) =  + \infty $. According to Theorem 3.3, it is known that ${\delta _\Delta }$ is the only invariant ergodic measure of  ${\Phi ^\gamma }$.  From the variational principle of the flow, we obtain $h\left( {{\Phi ^\gamma }} \right) = 0$.
\subsection{The proof of Theorem A and Theorem B}
For the following discussion, we write the flow ${\Phi ^{\gamma '}}$ as ${\Phi ^1}$. i.e. ${\Phi ^1}:{X^1} \times {\rm{R}} \to {X^1}$, where ${X^1} = X_ * ^1 \cup \left\{ \Delta  \right\}$. Now give the main results of the section 3 in this article.

\textbf{Theorem A.}
Known $h\left( {{\Phi ^1}} \right) = h\left( T \right) > 0$. For any real number $b \in \left( {0,h\left( T \right)} \right)$, we can find a flow $\left( {{X^\gamma },{\Phi ^\gamma }} \right)$ constructed with the roof function $\gamma \left( x \right)$, such that there exists $\nu  \in {{\rm M}_{erg,{\Phi ^\gamma }}}$ satisfying ${h_\nu }\left( {{\Phi ^\gamma }} \right) = b$.

\textbf{Theorem B.}
For any real number $0 < b <  + \infty $ , we can find a flow $\left( {{X^\gamma },{\Phi ^\gamma }} \right)$ constructed with the roof function $\gamma \left( x \right)$, such that $h\left( {{\Phi ^\gamma }} \right) = b$.

According to Theorem A and Theorem B, it is concluded that for the flow ${\Phi ^1}$ constructed above, both entropy degeneracy and entropy explosion can occur.

Before proving theorem A and theorem B, some lemmas are required.

\begin{lemma}\label{lem:shi}
	Let $a\left( x \right):{X_ * } \to \left( {0, + \infty } \right)$ be a positive finite continuous map. ${\Phi ^a}$ is a flow constructed with the roof function $a\left( x \right)$, i.e. ${\Phi ^a}:{X^a} \times {\rm{R}} \to {X^a}$, where ${X^a} = X_ * ^a \cup \left\{ {\Delta '} \right\}$. Put
	\begin{center}
		$a\left( {x,u} \right) = \left\{ {\begin{array}{*{20}{c}}
				{a\left( x \right),}&{\forall x \in {X_ * },0 \le u < 1}\\
				{c,}&\Delta
		\end{array}} \right.$,
	\end{center}
	where $c$ is a constant greater than zero. Define an additive function with $a\left( {x,u} \right)$, 
	\begin{center}
		$\theta \left( {x,u,t} \right) = \int_0^t {a\left( {{\Phi ^1}_s\left( {x,u} \right)} \right)} ds$.
	\end{center}
	$\left( {{X^1},\hat \Phi } \right)$ denote the time-changed flow of $\left( {{X^1},{\Phi ^1}} \right)$ with the additive function $\theta \left( {x,u,t} \right)$.
	
	Then $\left( {{X^1},\hat \Phi } \right)$ and $\left( {{X^a},{\Phi ^a}} \right)$ are topologically conjugate each other.
\end{lemma}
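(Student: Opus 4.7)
The plan is to exhibit an explicit conjugating homeomorphism $\pi:X^1\to X^a$ that is nothing but a fiberwise ``vertical stretch'' by the roof function, and then to verify the conjugacy equation $\pi\circ\hat\Phi_t=\Phi^a_t\circ\pi$ by unpacking the time change directly. Concretely, I set
\[
\pi(x,u)=\bigl(x,\,u\cdot a(x)\bigr)\quad\text{for }(x,u)\in X^1_*,\qquad \pi(\Delta)=\Delta'.
\]
The first thing to check is that $\pi$ descends to the quotients: under $\sim$ in $X^1$ we have $(x,1)\sim(Tx,0)$, and $\pi$ sends these to $(x,a(x))$ and $(Tx,0)$, which are precisely identified in $X^a$. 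The inverse candidate is $\pi^{-1}(y,v)=(y,v/a(y))$, which is well-defined because $0\le v<a(y)$ on $X^a_*$ and $a>0$.

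Next I would show $\pi$ is a homeomorphism. On $X^1_*$ both $\pi$ and $\pi^{-1}$ are continuous because $a$ is positive and continuous on $X_*$. At the added point, I use the characterization of convergence to the one-point compactification: a sequence $(x_n,u_n)\to\Delta$ in $X^1$ iff $x_n\to\mathbf 1$ in $X$, and analogously $(y_n,v_n)\to\Delta'$ in $X^a$ iff $y_n\to\mathbf 1$ (the latter uses that $a$ is bounded on any compact subset of $X_*$, so projections of compacta in $X^a_*$ to $X$ are compacta avoiding $\mathbf 1$). Since $\pi$ and $\pi^{-1}$ both leave the base coordinate untouched, continuity across $\Delta\leftrightarrow\Delta'$ is immediate.

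For the conjugacy it suffices, by the flow property and the cocycle identity for $\theta$, to check it on the base section $\{(x,0):x\in X_*\}$ and at $\Delta$. At $\Delta$ both sides collapse to $\Delta'$ since $\hat\Phi$ fixes $\Delta$ (the additive function becomes $\theta(\Delta,t)=ct$, so $\tau(\Delta,t)=t/c$, and $\Phi^1_{t/c}(\Delta)=\Delta$). For $(x,0)\in X^1_*$, write $\tau=\tau(x,0,t)$ so that $\int_0^\tau a(\Phi^1_s(x,0))\,ds=t$; since $a(\Phi^1_s(x,0))=a(T^i x)$ for $s\in[i,i+1)$, if $\tau\in[n,n+1)$ then
\[
t=\sum_{i=0}^{n-1}a(T^i x)+(\tau-n)\,a(T^n x),
\]
while $\hat\Phi_t(x,0)=\Phi^1_\tau(x,0)=(T^n x,\tau-n)$, hence $\pi(\hat\Phi_t(x,0))=(T^n x,(\tau-n)a(T^n x))$. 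On the other side, by definition of the suspension with roof $a$, $\Phi^a_t(x,0)=(T^n x,t-\sum_{i=0}^{n-1}a(T^i x))$ for the same $n$, and the displayed identity shows these agree. Points $(x,u)$ with $u>0$ reduce to $(x,0)$ by flowing backward and using that $\pi$ intertwines the flows on a full orbit once it does so at one point.

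The proof is essentially bookkeeping; the only mildly delicate step is the behavior at the compactification point $\Delta$, where one must confirm that the arbitrary positive constant $c$ used to extend $a$ does not matter (it only reparametrizes the trivial orbit at $\Delta$) and that the two one-point compactifications are compatible under $\pi$. Both points are handled by the convergence characterization noted above. No serious obstacle is anticipated.
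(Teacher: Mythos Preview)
Your proposal is correct and follows essentially the same approach as the paper: the same fiberwise stretch $\pi(x,u)=(x,u\,a(x))$, $\pi(\Delta)=\Delta'$, and the same key identity $t=\sum_{i=0}^{n-1}a(T^i x)+(\tau-n)\,a(T^n x)$ used to match $\pi\circ\hat\Phi_t$ with $\Phi^a_t$. The only organizational difference is that you reduce to the base section $(x,0)$ and propagate via the cocycle identity, whereas the paper computes directly at a general point $(x,v)$ by case analysis on which interval $t+v$ lands in; these are the same computation packaged differently, and your treatment of continuity at $\Delta\leftrightarrow\Delta'$ is in fact more explicit than the paper's.
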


\begin{proof}
	Define a map $\pi :{X^1} \to {X^a}$, with 
	\begin{center}
		$\pi\left( {x,u} \right) = \left\{ {\begin{array}{*{20}{c}}
				{x\left( ua\left(x\right) \right),}&{\forall x \in {X_ * },0 \le u < 1}\\
				{\Delta '}&\Delta
		\end{array}} \right.$.
	\end{center}
	$\pi $ is continuous and there is a continuous inverse map ${\pi ^ - }$, so $\pi $ is a homeomorphism because of the continuity.
	
	Take $\tau \left( {x,u,s} \right){\rm{ = }}\sup \left\{ {\left. t \right|\theta \left( {x,u,t} \right) \le s} \right\}$. We have $\theta \left( {x,0,\tau \left( {x,0,s} \right)} \right) = s$, $\tau \left( {x,0,0} \right) = 0$.
	The following only needs to prove that for all $\left( {x,v} \right) \in {X^a}$ and $t \in {\rm{R}}$, it holds   $\pi  \circ {\hat \Phi _t} \circ {\pi ^ - }\left( {x,v} \right) = {\Phi ^a}_t\left( {x,v} \right)$.
	
	(1) For all $\left( {x,v} \right) \in {X^a}$, when $ - v \le t <  - v + a\left( x \right)$: since
	\begin{center}
		${\hat \Phi _t} \circ {\pi ^ - }\left( {x,v} \right) = {\hat \Phi _t}\left( {x,\tau \left( {x,0,v} \right)} \right) = {\hat \Phi _{t + v}}\left( {x,\tau \left( {x,0,0} \right)} \right)$,
	\end{center}
	it follows that
	\begin{equation*}
		\begin{split}
			\pi  \circ {{\hat \Phi }_t} \circ {\pi ^ - }\left( {x,v} \right) &= \pi  \circ {{\hat \Phi }_t}\left( {x,\frac{v}{{a\left( x \right)}}} \right)\\
			&  = \pi  \circ {{\hat \Phi }_{t + v}}\left( {x,0} \right)\\
			&  = \pi \left( {x,\frac{{t + v}}{{a\left( x \right)}}} \right)\\
			&  = \left( {x,t + v} \right)\\
			&  = {\Phi ^a}_t\left( {x,v} \right).
		\end{split}
	\end{equation*}
	For all $\left( {x,v} \right) \in {X^a}$, if $t = a\left( x \right)$, we have $\pi  \circ {\hat \Phi _t} \circ {\pi ^ - }\left( {x,0} \right) = \left( {Tx,0} \right) = {\Phi ^a}_t\left( {x,0} \right)$ .
	
	(2) Denote ${c^ + }\left( {n,x} \right) = \sum\nolimits_{k = 0}^n {a\left( {{T^k}x} \right)} $ and ${c^ - }\left( {n,x} \right) = \sum\nolimits_{k = 1}^n {a\left( {{T^{ - k}}x} \right)} $. When ${c^ + }\left( {x,n - 1} \right) \le t + v < {c^ + }\left( {x,n} \right)$, we obtain
	\begin{equation}\label{equ:one}
		\begin{split}
			{\Phi ^a}_t\left( {x,v} \right)& = \left( {Tx,v + t - a\left( x \right)} \right) \\
			&=  \left( {{T^2}x,v + t - a\left( x \right) - a\left( {Tx} \right)} \right)\\
			\vdots \\
			&= \left( {{T^n}x,v + t - {c^ + }\left( {x,n - 1} \right)} \right)
		\end{split}.
	\end{equation}
	Since
	\begin{align*}
		&\int\limits_0^{1 - \frac{v}{{a\left( x \right)}} + \left( {n - 1} \right) + \frac{{t + v - {c^ + }\left( {x,n - 1} \right)}}{{a\left( {{T^n}x} \right)}}} {a\left( {{\Phi ^1}_s\left( {x,\frac{v}{{a\left( x \right)}}} \right)} \right)ds} \\
		&=\left( {1 - \frac{v}{{a\left( x \right)}}} \right)a\left( x \right) + a\left( {Tx} \right) +  \cdots  + a\left( {{T^{n - 1}}x} \right) + a\left( {{T^n}x} \right)\frac{{t + v - {c^ + }\left( {x,n - 1} \right)}}{{a\left( {{T^n}x} \right)}}\\
		&= t,
	\end{align*}
	it follows that
	\begin{align*}
		\tau \left( {x,\frac{v}{{a\left( x \right)}},t} \right) = 1 - \frac{v}{{a\left( x \right)}} + \left( {n - 1} \right) + \frac{{t + v - {c^ + }\left( {x,n - 1} \right)}}{{a\left( {{T^n}x} \right)}}.
	\end{align*}
	Therefore,  
	\begin{equation}\label{equ:two}
		\begin{split}
			\pi  \circ {{\hat \Phi }_t} \circ {\pi ^ - }\left( {x,v} \right)& = \pi  \circ {{\hat \Phi }_t}\left( {x,\frac{v}{{a\left( x \right)}}} \right)\\
			& = \pi  \circ {\Phi _{\tau \left( {x,\frac{v}{{a\left( x \right)}},t} \right)}}\left( {x,\frac{v}{{a\left( x \right)}}} \right)\\
			& = \pi \left( {x,\frac{v}{{a\left( x \right)}} + \tau \left( {x,\frac{v}{{a\left( x \right)}},t} \right)} \right)\\
			& = \pi \left( {x,\frac{v}{{a\left( x \right)}} + 1 - \frac{v}{{a\left( x \right)}} + \left( {n - 1} \right) + \frac{{t + v - {c^ + }\left( {x,n - 1} \right)}}{{a\left( {{T^n}x} \right)}}} \right)\\
			& = \pi \left( {x,n + \frac{{t + v - {c^ + }\left( {x,n - 1} \right)}}{{a\left( {{T^n}x} \right)}}} \right)\\
			& = \pi \left( {{T^n}x,\frac{{t + v - {c^ + }\left( {x,n - 1} \right)}}{{a\left( {{T^n}x} \right)}}} \right)\\
			& = \left( {{T^n}x,t + v - {c^ + }\left( {x,n - 1} \right)} \right).
		\end{split}
	\end{equation}
	Combining the formula (3.1) and the formula (3.2), we obtain 
	\begin{align*}
		\pi  \circ {\hat \Phi _t} \circ {\pi ^{ - 1}}\left( {x,v} \right) = {\Phi ^a}_t\left( {x,v} \right).
	\end{align*}
	Similarly, When ${c^ - }\left( {x,n} \right) \le t + u < {c^ - }\left( {x,n - 1} \right)$ , 
	\begin{align*}
		\pi  \circ {\hat \Phi _t} \circ {\pi ^{ - 1}}\left( {x,v} \right) = {\Phi ^a}_t\left( {x,v} \right).
	\end{align*}
	When $\left( {x,u} \right)$ is a fixed point, apparently it holds that $\pi  \circ {\hat \Phi _t} \circ {\pi ^ - }\left( {x,v} \right) = {\Phi ^a}_t\left( {x,u} \right)$ is true.
	
	In summary, $\left( {{X^1},\hat \Phi } \right)$ and $\left( {{X^a},{\Phi ^a}} \right)$  are topologically conjugate each other.
	
\end{proof}

\begin{lemma}\label{lem:five}
	Let $\left( {X,\phi } \right)$ and $\left( {Y,\psi } \right)$ be two flows on compact metric spaces $X$ , $Y$ respectively. Assume that they are topologically conjugate each other. The following conclusions hold:
	
	(1) The map ${\pi _ * }:\mu  \in {{\rm M}_{erg,\phi }} \to {\pi _ * }\mu  \in {{\rm M}_{erg,\psi }}$ is bijectic, where ${\pi _ * }\mu \left( B \right) = \mu \left( {{\pi ^{ - 1}}B} \right)$ for all $ B \in {\ B}\left( Y \right)$;
	
	(2) ${h_\mu }\left( \phi  \right) = {h_{{\pi _ * }\mu }}\left( \psi  \right)$.
\end{lemma}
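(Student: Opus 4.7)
The plan is to reduce both parts to the standard observation that a topological conjugacy of flows induces, via the time-$1$ map, a measurable isomorphism of measure-preserving systems, and that invariance, ergodicity, and Kolmogorov--Sinai entropy all transfer through this isomorphism. By hypothesis there is a homeomorphism $\pi\colon X\to Y$ satisfying $\pi\circ\phi_t=\psi_t\circ\pi$ for every $t\in\R$.

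For part (1), I would first check that $\pi_*$ maps $M_{erg,\phi}$ into $M_{erg,\psi}$. Because $\pi$ is a homeomorphism between compact metric spaces, it is bimeasurable, so $\pi_*\mu$ is a well-defined Borel probability measure on $Y$. The intertwining identity, rewritten as $\pi^{-1}\circ\psi_t^{-1}=\phi_t^{-1}\circ\pi^{-1}$, yields
\[
(\pi_*\mu)(\psi_t^{-1}B)=\mu(\pi^{-1}\psi_t^{-1}B)=\mu(\phi_t^{-1}\pi^{-1}B)=\mu(\pi^{-1}B)=(\pi_*\mu)(B),
\]
so $\pi_*\mu$ is $\psi$-invariant. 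If $B\subset Y$ is $\psi$-invariant for all $t$, then $\pi^{-1}B$ is $\phi$-invariant, so ergodicity of $\mu$ forces $(\pi_*\mu)(B)=\mu(\pi^{-1}B)\in\{0,1\}$. The same argument applied to $\pi^{-1}$ produces $(\pi^{-1})_*\colon M_{erg,\psi}\to M_{erg,\phi}$, and a direct calculation shows $(\pi^{-1})_*\circ\pi_*=\mathrm{id}$ and $\pi_*\circ(\pi^{-1})_*=\mathrm{id}$, so $\pi_*$ is bijective with inverse $(\pi^{-1})_*$.

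For part (2), by Definition~\ref{def:one} together with the variational principle (Theorem~2.3), the measure-theoretic entropy of a flow coincides with the measure-theoretic entropy of its time-$1$ map, so $h_\mu(\phi)=h_\mu(\phi_1)$ and $h_{\pi_*\mu}(\psi)=h_{\pi_*\mu}(\psi_1)$. The special case $\pi\circ\phi_1=\psi_1\circ\pi$ of the conjugacy identity exhibits $\pi$ as a measurable isomorphism from the system $(X,\mu,\phi_1)$ to $(Y,\pi_*\mu,\psi_1)$, and Kolmogorov--Sinai entropy is invariant under such isomorphism because finite measurable partitions of $X$ correspond bijectively under $\pi^{-1}$ to finite measurable partitions of $Y$ with the same joint distributions. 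Hence $h_\mu(\phi_1)=h_{\pi_*\mu}(\psi_1)$, and the equality $h_\mu(\phi)=h_{\pi_*\mu}(\psi)$ follows.

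There is no real obstacle in this argument: the entire proof is a formal unwinding of the single identity $\pi\circ\phi_t=\psi_t\circ\pi$, first at the level of Borel sets and then at the level of partitions. The only thing requiring care is keeping track of pushforwards versus pullbacks, which is entirely controlled by $\pi^{-1}\circ\psi_t=\phi_t\circ\pi^{-1}$.
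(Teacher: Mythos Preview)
Your proof is correct and follows essentially the same route as the paper: for (1) you push forward measures via $\pi$, verify invariance and ergodicity using the intertwining relation, and exhibit $(\pi^{-1})_*$ as the inverse; for (2) you reduce to the time-$1$ maps and invoke isomorphism invariance of Kolmogorov--Sinai entropy. One small remark: the equality $h_\mu(\phi)=h_\mu(\phi_1)$ is purely a matter of the definition adopted in the paper (Definition~\ref{def:one}), so your appeal to the variational principle here is unnecessary---the variational principle relates topological entropy to the supremum of metric entropies, not metric entropy of the flow to that of its time-$1$ map.
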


\begin{proof}
	First,we prove the conclusion (1). Obviously ${\pi _ * }\mu $ is a Borel probability measure.
	Take $\mu $ be $\phi $-invariant measure. According to the definition of ${\pi _ * }\mu $, for all $t \in {\rm{R}}$ and $B \in {\rm B}\left( Y \right)$, it holds ${\pi _ * }\mu \left( B \right) = \mu \left( {{\pi ^ - }B} \right)$ and ${\pi _ * }\mu \left( {{\psi _t}B} \right) = \mu \left( {{\pi ^ - } \circ {\psi _t}B} \right)$ . Since $\phi $ and $\psi $ are topologically conjugate each other, and $\mu $ is $\phi $-invariant measure, it follows $\mu \left( {{\pi ^ - } \circ {\psi _t}B} \right) = \mu \left( {{\phi _t} \circ {\pi ^ - }B} \right) = \mu \left( {{\pi ^ - }B} \right)$ . Therefore, there exists ${\pi _ * }\mu \left( B \right) = {\pi _ * }\mu \left( {{\psi _t}B} \right)$, i.e., ${\pi _ * }\mu $ is $\psi $- invariant measure.
	
	Let $\mu $ be $\phi $-ergodic measure. Since $\phi $ and $\psi $ are topologically conjugate each other, it follows that  ${\phi _t} \circ {\pi ^ - }B = {\pi ^ - } \circ {\psi _t}B = {\pi ^ - }B$ if ${\psi _t}B = B$ for all $t \in {\rm{R}}$. And we obtain  $\mu \left( {{\pi ^ - }B} \right) = 1$, because $\mu $  is ergodic. Combining the definition of ${\pi _ * }\mu $ , ${\pi _ * }\mu \left( B \right) = \mu \left( {{\pi ^ - }B} \right) = 1$ or 0 . That is ,${\pi _ * }\mu $ is $\psi $-ergodic measure.
	So, ${\pi _ * }\mu  \in {{\rm M}_{erg,\psi }}$.
	Define ${\pi ^ - }_ * \nu $ for all $\nu  \in {{\rm M}_{erg,\psi }}$, i.e., ${\pi ^ - }_ * \nu \left( A \right) = \nu \left( {\pi A} \right)$ for all $A \in {\mathscr B}\left( X \right)$. In the same way, we obtain ${\pi ^ - }_ * \nu  \in {{\rm M}_{erg,\phi }}$. Since $\pi $ is a homeomorphism, it follows  ${\pi _ * }{\pi ^ - }_ * \nu  = \nu $ .
	Therefore, the map ${\pi _ * }$ is bijective
	
	Conclusion (2) is demonstrated below. Since all of the $\phi $-invariant measures are ${\phi _1}$-invariant measures ,it follows $\left( {X,{\mathscr B}\left( X \right),\mu ,{\phi _1}} \right)$ , $\left( {Y,{\mathscr B}\left( Y \right),{\pi _ * }\mu ,{\psi _1}} \right)$ are two homogeneous preserving-measure systems for $\mu  \in {{\rm M}_{erg,\phi }}$. Since entropy is an equivalent invariant of a discrete dynamical system, it follows $ is obtained{h_\mu }\left( {{\phi _1}} \right) = {h_{{\pi _ * }\mu }}\left( {{\psi _1}} \right)$ . According to the definition of the metric entropy of the flow, it can be obtained ${h_\mu }\left( \phi  \right) = {h_\mu }\left( {{\phi _1}} \right)$ and ${h_{{\pi _ * }\mu }}\left( \psi  \right) = {h_{{\pi _ * }\mu }}\left( {{\psi _1}} \right)$, i.e. ${h_\mu }\left( \phi  \right) = {h_{{\pi _ * }\mu }}\left( \psi  \right)$ .
	
	In summary, Lemma \rm{3.5} is proved.	
\end{proof}

\begin{lemma}\label{lem:nin}
	Let $\left( {X,\phi } \right)$ be a flow on the compact metric space $X$, with only one fixed point, denated as $\Delta $. Let $\left( {X,\hat \phi } \right)$ be the time-changed flow of $\left( {X,\phi } \right)$ with respect to the additive function of $\theta \left( {x,t} \right)$ ,satisfying the following conditions :
	
	(1) $\theta \left( {x,t} \right)$ is continuous with respect to $\left( {x,t} \right)$, for $x \ne \Delta $ ; $\theta \left( {x,t} \right)$ is continuous with respect to $x$ and is strictly increasing with respect to $t$, for $x = \Delta $;
	
	(2) $\theta \left( {x,0} \right) = 0$, $\theta \left( {x,t + s} \right) = \theta \left( {x,t} \right) + \theta \left( {\phi \left( {x,t} \right),s} \right)$;
	
	(3) $\phi \left( {x,t} \right) = \hat \phi \left( {x,\theta \left( {x,t} \right)} \right)$;
	
	(4) $0 < \int\limits_X {\theta \left( {x,1} \right)d\mu }  <  + \infty $, for all $\mu  \in {{\rm M}_{erg,\phi }}$.
	
	Define $\hat \mu  \in {{\rm M}_{erg,\hat \phi }}$ , for all $\mu  \in {{\rm M}_{erg,\phi }}$,
	\begin{align}\label{eq:three}
		{E_{\hat \mu }}\left( f \right) = \frac{1}{{{E_\mu }\left( {\theta \left( {x,1} \right)} \right)}}{E_\mu }\left( {\int\limits_0^{\theta \left( {x,1} \right)} {f\left( {{{\hat \phi }_s}\left( x \right)} \right)ds} } \right),
	\end{align}
	where $f$ is continuous function of $X$.
	
	Then the map $ \wedge :\mu  \in {{\rm M}_{erg,\phi }} \to \mathord{\buildrel{\lower3pt\hbox{$\scriptscriptstyle\frown$}}
		\over \mu }  \in {{\rm M}_{erg,\hat \phi }}$ is bijective.
\end{lemma}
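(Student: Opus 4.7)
The plan is to verify in order that $\hat\mu$ defined by \eqref{eq:three} is a Borel probability measure, is $\hat\phi$-invariant, is $\hat\phi$-ergodic, and then to build a two-sided inverse of $\wedge$ from the inverse time change. Well-definedness is immediate: condition~(4) makes $E_\mu(\theta(x,1))$ positive and finite, so $f\mapsto E_{\hat\mu}(f)$ is a non-negative linear functional on $C(X)$, with $E_{\hat\mu}(1)=1$, and the Riesz representation theorem produces $\hat\mu$.

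For $\hat\phi$-invariance, fix $r\in\mathbb{R}$ and a continuous $f$. The substitution $u=s+r$ in the inner integral of \eqref{eq:three} gives
\[
\int_0^{\theta(x,1)} f(\hat\phi_{s+r}(x))\,ds = \int_r^{r+\theta(x,1)} f(\hat\phi_u(x))\,du,
\]
which I decompose as $\int_0^{\theta(x,1)}+\int_{\theta(x,1)}^{r+\theta(x,1)}-\int_0^{r}$. Using the identity $\hat\phi_{\theta(x,1)}(x)=\phi_1(x)$ (obtained from conditions (2)--(3)) and the substitution $u=\theta(x,1)+v$, the middle term becomes $\int_0^r f(\hat\phi_v(\phi_1 x))\,dv$. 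After applying $E_\mu$, the $\phi_1$-invariance of $\mu$ cancels this term against the last one (since for each fixed $v$ the map $x\mapsto f(\hat\phi_v(x))$ pushes through $\phi_1$), leaving $E_{\hat\mu}(f\circ\hat\phi_r)=E_{\hat\mu}(f)$. Ergodicity is then immediate: by (3) the orbits of $\phi$ and $\hat\phi$ coincide, so their invariant $\sigma$-algebras agree; for any $\hat\phi$-invariant $A$ one has $\mathbf{1}_A\circ\hat\phi_s=\mathbf{1}_A$, so \eqref{eq:three} collapses to
\[
\hat\mu(A)=\frac{E_\mu(\theta(x,1)\mathbf{1}_A(x))}{E_\mu(\theta(x,1))}\in\{0,1\}
\]
by $\phi$-ergodicity of $\mu$.

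For bijectivity, I would introduce the inverse cocycle $\tau(x,t)=\sup\{s:\theta(x,s)\le t\}$, which is additive for $\hat\phi$ and satisfies $\hat\phi_t(x)=\phi_{\tau(x,t)}(x)$ and $\tau(x,\theta(x,t))=t$. Given $\hat\mu\in\mathrm{M}_{erg,\hat\phi}$, define $\check\mu$ by the symmetric formula with $(\phi,\theta)$ replaced by $(\hat\phi,\tau)$; after checking the analogue of condition~(4) for $\tau$ under $\hat\mu$, the three steps above applied with the roles reversed place $\check\mu$ in $\mathrm{M}_{erg,\phi}$, and the round-trip identities $\widehat{\check\mu}=\hat\mu$ and $\check{\widehat{\mu}}=\mu$ follow from Fubini and the mutual inverse relation between $\theta$ and $\tau$, with the two normalizing constants cancelling.

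The principal obstacle is the invariance step, since the two boundary corrections must be forced to telescope by combining the cocycle identity (2) with the conjugacy (3); once invariance is secured, ergodicity and the existence of an inverse both follow almost formally from the orbit equivalence of $\phi$ and $\hat\phi$ and the symmetric conditions obeyed by $\tau$. The fixed point $\Delta$ causes no trouble because condition~(4) prevents any ergodic $\mu$ relevant to the map $\wedge$ from concentrating mass in a way incompatible with the time change.
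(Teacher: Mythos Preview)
Your argument is correct, and for well-definedness and bijectivity it coincides with the paper's approach (Riesz representation for $\hat\mu$; the inverse cocycle $\tau$ to build the backward map). Where you diverge is in the invariance and ergodicity steps. For invariance, the paper does not telescope directly: it instead proves that the Laplace transform $\int_0^\infty e^{-\alpha t}\,E_{\hat\mu}(f\circ\hat\phi_t)\,dt$ equals $\frac{1}{\alpha}E_{\hat\mu}(f)$ for every $\alpha>0$ (their Claim~1, via a change of variables $l=\theta(x,u)$ and the cocycle identity), separately establishes continuity of $t\mapsto E_{\hat\mu}(f\circ\hat\phi_t)$ (Claim~2), and then invokes injectivity of the Laplace transform. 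Your decomposition $\int_r^{r+\theta(x,1)}=\int_0^{\theta(x,1)}+\int_{\theta(x,1)}^{r+\theta(x,1)}-\int_0^r$ together with $\phi_1$-invariance of $\mu$ is shorter and avoids the transform machinery entirely. For ergodicity, the paper proves a ratio ergodic theorem for $\hat\phi$ (Claim~3, by writing $\int_0^{\theta(x,t)}f(\hat\phi_s x)\,ds$ as a Birkhoff sum for $\phi_1$) and then argues by contradiction; your observation that $\phi$ and $\hat\phi$ share orbits, hence invariant sets, so that \eqref{eq:three} collapses on an invariant $A$ to $E_\mu(\theta(\cdot,1)\mathbf{1}_A)/E_\mu(\theta(\cdot,1))\in\{0,1\}$, is again more elementary. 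The paper's route has the flavor of the classical special-flow literature, while yours exploits the orbit equivalence more directly; both reach the same conclusion with comparable rigor.
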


\begin{proof}
	
	In the first step, it is proved that the atomic measure of $\Delta $ is mapped as an atomic measure.
	Let the measure $\mu $ be an atomic measure of $\Delta $ . It follows 
	\begin{align*}
		\hat \mu \left( {\left\{ \Delta  \right\}} \right){\rm{ = }}\frac{{{E_\mu }\left( {\int_0^{\theta \left( {x,1} \right)} {{\chi _\Delta }\left( {{{\hat \phi }_t}x} \right)dt} } \right)}}{{{E_\mu }\left( {\theta \left( {x,1} \right)} \right)}} = \frac{{{E_\mu }\left( {\theta \left( {x,1} \right)} \right)}}{{{E_\mu }\left( {\theta \left( {x,1} \right)} \right)}} = 1.
	\end{align*}
	So, $\hat \mu $ is the atomic measure of $\Delta $ .
	
	Since ${\hat \phi _t}x = {\phi _{\tau \left( {x,t} \right)}}x$ and $\tau \left( {x,t} \right)$ is strictly increasing with respect to $t$, it follows $x$ is not the fixed point of the flow $\hat \phi $ when $x \in X$ is not the fixed point of the flow of $\phi $.
	
	In the second step, we proves that $ \hat \mu  \in {M_{erg,\hat \phi }}$ is well-defined.
	
	Obviously, $\hat \mu $ satisfying the above definition is a measure. And for all $A \in {\mathscr B}\left( X \right)$, it holds
	\begin{align*}
		\hat \mu \left( {\left\{ \Delta  \right\}} \right){\rm{ = }}\frac{{{E_\mu }\left( {\int_0^{\theta \left( {x,1} \right)} {{\chi _\Delta }\left( {{{\hat \phi }_t}x} \right)dt} } \right)}}{{{E_\mu }\left( {\theta \left( {x,1} \right)} \right)}} = \frac{{{E_\mu }\left( {\theta \left( {x,1} \right)} \right)}}{{{E_\mu }\left( {\theta \left( {x,1} \right)} \right)}} = 1.
	\end{align*}
	So, $\hat \mu \left( X \right) = 1$.
	
	In the third step, it is proved that $\hat \mu $ is an invariant measure of the flow $\hat \phi $ .\\
	\textbf{Claim 1.} For all continuous function $f$, $\alpha  > 0$, there is $\int_0^\infty  {{e^{ - \alpha t}}} {E_{\hat \mu }}\left( {f\left( {{{\hat \phi }_t}x} \right)} \right)dt = \frac{1}{\alpha }{E_{\hat \mu }}\left( f \right)$ .
	
	\begin{proof}
		Just verify that $\int_0^\infty  {{e^{ - \alpha t}}} {E_\mu }\left( {\int_0^{\theta \left( {x,1} \right)} {f\left( {{{\hat \phi }_t} \circ {{\hat \phi }_s}x} \right)} ds} \right)dt = \frac{1}{\alpha }{E_\mu }\left( {\int_0^{\theta \left( {x,1} \right)} {f\left( {{{\hat \phi }_s}\left( x \right)} \right)ds} } \right)$.
		
		Simplify the right-hand side of above equation,
		\begin{equation*}
			\begin{split}
				&\int_0^\infty  {{e^{ - \alpha t}}} {E_\mu }\left( {\int_0^{\theta \left( {x,1} \right)} {f\left( {{{\hat \phi }_t} \circ {{\hat \phi }_s}x} \right)} ds} \right)dt \\
				& = {E_\mu }\left( {\int_0^{\theta \left( {x,1} \right)} {\int_0^\infty  {{e^{ - \alpha t}}f\left( {{{\hat \phi }_t} \circ {{\hat \phi }_s}x} \right)dtds} } } \right)\\
				&\mathop {\rm{ = }}\limits^{t + s = l} {E_\mu }\left( {\int_0^{\theta \left( {x,1} \right)} {\int_s^\infty  {{e^{ - \alpha \left( {l - s} \right)}}f\left( {{{\hat \phi }_l}x} \right)dlds} } } \right)\\
				& = {E_\mu }\left( {\int_0^{\theta \left( {x,1} \right)} {{e^{\alpha s}}ds\int_s^\infty  {{e^{ - \alpha l}}f\left( {{{\hat \phi }_l}x} \right)dl} } } \right)\\
				&= {E_\mu }\left\{ {\int_0^{\theta \left( {x,1} \right)} {{e^{ - \alpha l}}f\left( {{{\hat \phi }_l}x} \right)dl\int_0^l {{e^{\alpha s}}ds} }  + \int_{\theta \left( {x,1} \right)}^\infty  {{e^{ - \alpha l}}f\left( {{{\hat \phi }_l}x} \right)dl\int_0^{\theta \left( {x,1} \right)} {{e^{\alpha s}}ds} } } \right\}.
			\end{split}
		\end{equation*}
		
		Since $\int_0^l {{e^{\alpha s}}ds}  = \frac{1}{\alpha }\left( {{e^{\alpha l}} - 1} \right)$, it follows that
		\begin{equation}\label{equ:four}
			\begin{split}
				&\int_0^\infty  {{e^{ - \alpha t}}} {E_\mu }\left( {\int_0^{\theta \left( {x,1} \right)} {f\left( {{{\hat \phi }_t} \circ {{\hat \phi }_s}x} \right)} ds} \right)dt\\
				& = \frac{1}{\alpha }{E_\mu }\left\{ {\int_0^{\theta \left( {x,1} \right)} {\left( {{e^{\alpha l}} - 1} \right){e^{ - \alpha l}}f\left( {{{\hat \phi }_l}x} \right)dl + \left( {{e^{\alpha \theta \left( {x,1} \right)}} - 1} \right)\int_{\theta \left( {x,1} \right)}^\infty  {{e^{ - \alpha l}}f\left( {{{\hat \phi }_l}x} \right)dl} } } \right\}\\
				& = \frac{1}{\alpha }{E_\mu }\left\{ {\int_0^{\theta \left( {x,1} \right)} {f\left( {{{\hat \phi }_l}x} \right)dl}  - \int_0^\infty  {{e^{ - \alpha l}}f\left( {{{\hat \phi }_l}x} \right)dl}  + \int_{\theta \left( {x,1} \right)}^\infty  {{e^{\alpha  - \alpha l}}f\left( {{{\hat \phi }_l}x} \right)dl} } \right\}
			\end{split}.
		\end{equation}
		In addition, we have
		\begin{equation}\label{equ:five}
			\begin{split}
				&{E_\mu }\left( {\int_{\theta \left( {x,1} \right)}^\infty  {{e^{\alpha \theta \left( {x,1} \right) - \alpha l}}f\left( {{{\hat \phi }_l}x} \right)dl} } \right)\\
				&={E_\mu }\left( {\int_1^\infty  {{e^{\alpha \theta \left( {x,1} \right) - \alpha \theta \left( {x,u} \right)}}f\left( {{\phi _u}x} \right)d\theta \left( {x,u} \right)} } \right)\\
				&= {E_\mu }\left( {\int_1^\infty  {{e^{ - \alpha \theta \left( {{\phi _1}x,u - 1} \right)}}f\left( {{\phi _u}x} \right)d\theta \left( {x,u} \right)} } \right)\\
				& = {E_\mu }\left( {\int_1^\infty  {{e^{ - \alpha \theta \left( {{\phi _1}x,u - 1} \right)}}f\left( {{\phi _{u - 1}}x} \right)d\theta \left( {{\phi _{ - 1}}x,u} \right)} } \right)\\
				& = {E_\mu }\left( {\int_0^\infty  {{e^{ - \alpha \theta \left( {x,u} \right)}}f\left( {{\phi _u}x} \right)d\theta \left( {x,u} \right)} } \right)\\
				&= {E_\mu }\left( {\int_0^\infty  {{e^{ - \alpha l}}f\left( {{{\hat \phi }_l}x} \right)dl} } \right)
			\end{split}.
		\end{equation}
		
		Combining the formula \rm{(3.4)} and the formula \rm{(3.5)}, it follows that
		\begin{align*}
			\int_0^\infty  {{e^{ - \alpha t}}} {E_\mu }\left( {\int_0^{\theta \left( {x,1} \right)} {f\left( {{{\hat \phi }_t} \circ {{\hat \phi }_s}x} \right)} ds} \right)dt = \frac{1}{\alpha }{E_\mu }\left( {\int_0^{\theta \left( {x,1} \right)} {f\left( {{{\hat \phi }_s}\left( x \right)} \right)ds} } \right).
		\end{align*}
		So, $\int_0^\infty {{e^{ - \alpha t}}} {E_{\hat \mu }}\left( {f\left( {{{\hat \phi }_t}x} \right)} \right)dt = \frac{1}{\alpha }{E_{\hat \mu }}\left( f \right)$.
		
		In summary, Claim \rm{1} is proved.
	\end{proof}
	
	\textbf{Claim 2.} For all continuous function $f$, ${E_{\hat \mu }}\left( {f\left( {{{\hat \phi }_t}} \right)} \right)$ is continuous with respect to $t$.
	\begin{proof}
		It is clear that
		\begin{center}
			${E_{\hat \mu }}\left( {f\left( {{{\hat \phi }_t}x} \right)} \right) = \frac{{{E_\mu }\left( {\int_0^{\theta \left( {x,1} \right)} {f\left( {{{\hat \phi }_{t + s}}x} \right)ds} } \right)}}{{{E_\mu }\left( {\theta \left( {x,1} \right)} \right)}} = \frac{{{E_\mu }\left( {\int_t^{t + \theta \left( {x,1} \right)} {f\left( {{{\hat \phi }_s}x} \right)ds} } \right)}}{{{E_\mu }\left( {\theta \left( {x,1} \right)} \right)}}$.
		\end{center}
		
		Since $f$ is a continuous function on the compact metric space $X$, the function $f$ is bounded. For all $x \in X$, it holds that $\left| {f\left( x \right)} \right| \le c$, where $c$ is constant.
		For any real number, we obtain
		\begin{equation*}
			\begin{split}
				&\left| {{E_{\hat \mu }}\left( {f\left( {{{\hat \phi }_t}x} \right)} \right) - {E_{\hat \mu }}\left( {f\left( {{{\hat \phi }_l}x} \right)} \right)} \right|\\
				& \le \left| {{E_\mu }\left( {\int_t^{t + \theta \left( {x,1} \right)} {f\left( {{{\hat \phi }_s}x} \right)ds} } \right) - {E_\mu }\left( {\int_l^{l + \theta \left( {x,1} \right)} {f\left( {{{\hat \phi }_s}x} \right)ds} } \right)} \right|\\
				& \le {E_\mu }\left( {\int_l^t {\left| {f\left( {{{\hat \phi }_s}x} \right)} \right|ds} } \right) + {E_\mu }\left( {\int_{\theta \left( {x,1} \right) + l}^{\theta \left( {x,1} \right) + t} {\left| {f\left( {{{\hat \phi }_s}x} \right)} \right|ds} } \right)\\
				& \le \frac{{2c\left( {t - l} \right)}}{{{E_\mu }\left( {\theta \left( {x,1} \right)} \right)}}.
			\end{split}
		\end{equation*}
		Due to $0 < {E_\mu }\left( {\theta \left( {x,1} \right)} \right) < \infty $ , ${E_{\hat \mu }}\left( {f\left( {{{\hat \phi }_t}} \right)} \right)$ is continuous with respect to $t$.
		
		In summary, Claim \rm{2} is proved.	
	\end{proof}
	
	Next prove that $\hat \mu $ is the invariant measure of $\hat \phi $.
	According to Claim\rm{1},
	\begin{center}
		$\int_0^\infty  {{e^{ - \alpha t}}} \left\{ {{E_{\hat \mu }}\left( {f\left( {{{\hat \phi }_t}x} \right)} \right){\rm{ - }}{E_{\hat \mu }}\left( f \right)} \right\}dt = 0$.
	\end{center}
	According to Claim\rm{2}, ${E_{\hat \mu }}\left( {f\left( {{{\hat \phi }_t}x} \right)} \right)$ is continuous. So, for all $t \ge 0$, we obtain 
	\begin{center}
		${E_{\hat \mu }}\left( {f\left( {{{\hat \phi }_t}x} \right)} \right) = {E_{\hat \mu }}\left( f \right)$.
	\end{center}
	Similarly, ${E_{\hat \mu }}\left( {f\left( {{{\hat \phi }_t}x} \right)} \right) = {E_{\hat \mu }}\left( f \right)$, for all $t \ge 0$.
	
	By the equivalence condition defined for invariant measures, $\hat \mu $ is a invariant measure of $\hat \phi $.
	
	In the Third step, prove that $\hat \mu $ is a ergodic measure of $\hat \phi $.\\
	\textbf{Claim 3.} 	Let $f$,\quad $g$ be two continuous functions defined on the compact metric space $X$ with $g > 0$. Then the following limit
	\begin{center}
		$\mathop {\lim }\limits_{t \to \infty } \frac{{\int_0^{\theta \left( {x,t} \right)} {f\left( {{{\hat \phi }_s}x} \right)ds} }}{{\int_0^{\theta \left( {x,t} \right)} {g\left( {{{\hat \phi }_s}x} \right)ds} }} = h\left( x \right)$
	\end{center}
	exists $\hat \mu  - a.e.x \in X$ and $h\left( x \right) = h\left( {{\phi _t}x} \right)$,\quad $\forall t \in {\rm{R}}$.
	
	If $\mu$ is an invariant ergodic measure of $\phi$, then
	\begin{center}
		$h\left( x \right) = \frac{{{E_{\hat \mu }}\left( f \right)}}{{{E_{\hat \mu }}\left( g \right)}}$,\quad $\hat \mu  - a.e.x \in X$.
	\end{center}
	
	\begin{proof}
		Let $F\left( x \right) = \int_0^{\theta \left( {x,1} \right)} {f\left( {{{\hat \phi }_l}x} \right)dl} $,\quad $G\left( x \right) = \int_0^{\theta \left( {x,1} \right)} {g\left( {{{\hat \phi }_l}x} \right)dl} $. It follows that
		\begin{equation*}
			\begin{split}
				& \int_0^{\theta \left( {x,t} \right)} {f\left( {{{\hat \phi }_s}x} \right)ds} \\
				& = \int_0^{\theta \left( {x,1} \right)} {f\left( {{{\hat \phi }_s}x} \right)ds}  + \int_0^{\theta \left( {{\phi _1}x,1} \right)} {f\left( {{{\hat \phi }_s} \circ {\phi _1}x} \right)ds}  + \int_0^{\theta \left( {{\phi _2}x,1} \right)} {f\left( {{{\hat \phi }_s} \circ {\phi _2}x} \right)} ds +  \cdots \\
				& = F\left( x \right) + F\left( {{\phi _1}x} \right) + F\left( {{\phi _2}x} \right) +  \cdots .
			\end{split}
		\end{equation*}
		Thus, applying the Birkhoff's ergodic theorem to $F\left( x \right)$ ,\quad $G\left( x \right)$ and ${\phi _1}$ , we obtain
		\begin{center}
			$\mathop {\lim }\limits_{t \to \infty } \frac{{\int_0^{\theta \left( {x,t} \right)} {f\left( {{{\hat \phi }_s}x} \right)ds} }}{{\int_0^{\theta \left( {x,t} \right)} {g\left( {{{\hat \phi }_s}x} \right)ds} }}$, \quad $\mu  - a.e.x \in X$.
		\end{center}

		Since the zero measure set of $\mu $ is also the zero measure set of $\hat \mu $ \cite{2008TIME}, it follows that
		\begin{center}
			$\mathop {\lim }\limits_{t \to \infty } \frac{{\int_0^{\theta \left( {x,t} \right)} {f\left( {{{\hat \phi }_s}x} \right)ds} }}{{\int_0^{\theta \left( {x,t} \right)} {g\left( {{{\hat \phi }_s}x} \right)ds} }}$,\quad $\hat \mu  - a.e.x \in X$
		\end{center}
		And $h\left( x \right) = h\left( {{\phi _1}x} \right)$, \quad $\mu  - a.e.x \in X$ .
		
		Further, if $\mu $ is the invariant ergodic measure of $\phi $ ,then
		\begin{center}
			$h\left( x \right) = \frac{{{E_{\hat \mu }}\left( f \right)}}{{{E_{\hat \mu }}\left( g \right)}}$,\quad $\hat \mu  - a.e.x \in X$.
		\end{center}
		In summary, Claim \rm{3} is proved.
	\end{proof}
	
	Prove that $\hat \mu $ is the ergodic measure of the flow $\hat \phi $ by contradiction.
	Assume that $\hat \mu $ is not a ergodic measure. There exists an invariant set $A$ of the flow $\phi $, satisfying $\hat \mu \left(A \right) > 0$ and $\hat \mu \left({X\backslash A} \right) > 0$. Let $f$ be a continuous function defined on $X$ and satisfy that for all $x \in a $, \quad $f\left(x \right) = 0$ and ${E_{\hat \mu }}\left( f \right) > 0$. Let $g$ be a positive continuous function defined on $X$. According to Claim \rm{3}, we obtain 
	\begin{center}
		$\mathop {\lim }\limits_{t \to \infty } \frac{{\int_0^{\theta \left( {x,t} \right)} {f\left( {{{\hat \phi }_s}x} \right)ds} }}{{\int_0^{\theta \left( {x,t} \right)} {g\left( {{{\hat \phi }_s}x} \right)ds} }} = \frac{{{E_{\hat \mu }}\left( f \right)}}{{{E_{\hat \mu }}\left( g \right)}}$,\quad $\hat \mu  - a.e.x \in X$.
	\end{center}
	However, when $x \in A$, the above limit is zero, resulting in a contradiction. The hypothesis is not valid. Thus, $\hat \mu $ is the ergodic measure of the flow $\hat \phi $.
	
	In the fourth step, prove that $\wedge $ is bijective.
	
	Take any continuous function $f$ on the compact metric space $X$. If $\int\limits_X {fdm} = \int\limits_X {fd\sigma} $, then $m = \sigma $. So $\ wedge $ is clearly injective. Next we just have to prove that $\wedge $is surjective.	
	
	Define $\ mu '\ in {{\ rm M} _ {erg, \ phi}} $ for all $\ hat \ mu \ in {{\ rm M} _ {erg, \ hat \ phi}} $, satisfying
	\begin{align}\label{equ:six}
		{E_{\mu '}}\left( f \right) = \frac{1}{{{E_{\hat \mu }}\left( {\tau \left( {x,1} \right)} \right)}}{E_{\hat \mu }}\left( {\int\limits_0^{\tau \left( {x,1} \right)} {f\left( {{\phi _t}\left( x \right)} \right)dt} } \right).
	\end{align}
	Similarly, $\mu '$ satisfying the formula \rm{(3.6)} is the Borel probability invariant ergodic measure of flow $\phi $.
	
	Since 
	\begin{equation}\label{equ:seven}
		\begin{split}
			&{E_{\mu '}}\left( {\int\limits_0^{\theta \left( {x,1} \right)} {f\left( {{{\hat \phi }_t}\left( x \right)} \right)dt} } \right)\\
			& = \frac{{{E_{\hat \mu }}\left( {\int\limits_0^{\tau \left( {x,1} \right)} {\int\limits_0^{\theta \left( {x,1} \right)} {f\left( {{{\hat \phi }_s}\left( {{\phi _t}\left( x \right)} \right)} \right)} } dsdt} \right)}}{{{E_{\hat \mu }}\left( {\tau \left( {x,1} \right)} \right)}}\\
			&{\rm{ = }}\frac{{{E_{\hat \mu }}\left( {\int\limits_0^1 {f\left( {{{\hat \phi }_t}\left( x \right)} \right)} ds} \right)}}{{{E_{\hat \mu }}\left( {\tau \left( {x,1} \right)} \right)}}\\
			&{\rm{ = }}\frac{{{E_{\hat \mu }}\left( f \right)}}{{{E_{\hat \mu }}\left( {\tau \left( {x,1} \right)} \right)}}\\
			&{\rm{ = }}\frac{{{E_{\hat \mu }}\left( {\int\limits_0^{\theta \left( {x,\tau \left( {x,1} \right)} \right)} {f\left( {{{\hat \phi }_t}\left( x \right)} \right)} ds} \right)}}{{{E_{\hat \mu }}\left( {\tau \left( {x,1} \right)} \right)}}
		\end{split}
	\end{equation}
	and
	\begin{equation}\label{equ:eig}
		\begin{split}
			&{E_{\mu '}}\left( {\theta \left( {x,1} \right)} \right)\\
			&  = \frac{{{E_{\hat \mu }}\left( {\int\limits_0^{\tau \left( {x,1} \right)} {\theta \left( {{\phi _t}\left( x \right),1} \right)dt} } \right)}}{{{E_{\hat \mu }}\left( {\tau \left( {x,1} \right)} \right)}}\\
			&= \frac{{{E_{\hat \mu }}\left( {\theta \left( {x,\tau \left( {x,1} \right)} \right)} \right)}}{{{E_{\hat \mu }}\left( {\tau \left( {x,1} \right)} \right)}}\\
			&= \frac{1}{{{E_{\hat \mu }}\left( {\tau \left( {x,1} \right)} \right)}}
		\end{split},
	\end{equation}
	it follows that
	\begin{center}
		\[{E_{\hat \mu }}\left( f \right) = \frac{1}{{{E_{\mu '}}\left( {\theta \left( {x,1} \right)} \right)}}{E_{\mu '}}\left( {\int\limits_0^{\theta \left( {x,1} \right)} {f\left( {{{\hat f}_t}\left( x \right)} \right)dt} } \right)\]
	\end{center}
	with combination of the formula \rm{(3.7)} and the formula \rm{(3.8)}. 
	Thus, $\wedge $ is surjective.
	
	To sum up, Lemma \rm{3.6} is proved.	
	
\end{proof}

\begin{lemma}\label{lem:ten}
	Let $\left({X,\phi} \right)$ be the flow on the compact metric space $X$, and $\left({\hat X,\hat \phi} \right)$ be the time-changed flow induced by the additive function $\theta \left({x,t} \right)$ satisfying the conditions of the above theorem. For all $\mu  \in {{\rm M}_{erg,\phi }}$, we have 
	\begin{center}
		${h_{\hat \mu }}\left( {\hat \phi } \right) = \frac{{{h_\mu }\left( \phi  \right)}}{{{E_\mu }\left( {\theta \left( {x,1} \right)} \right)}}$.
	\end{center}
\end{lemma}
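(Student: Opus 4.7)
The result is Abramov's formula for the entropy of a time-changed flow, and the plan is to reduce it to the classical Abramov formula for a suspension (special) flow over the time-$1$ map $\phi_1$.

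First I would set $r(x):=\theta(x,1)$; by condition $(4)$ of Lemma \ref{lem:nin} this roof function is positive with $0<E_\mu(r)<\infty$, and iterating the cocycle identity $\theta(x,t+s)=\theta(x,t)+\theta(\phi_tx,s)$ at integer values of $t,s$ gives $\theta(x,n)=\sum_{k=0}^{n-1}r(\phi_k x)$, the Birkhoff sum of $r$ over $\phi_1$. This identifies $r$ as the natural candidate roof function for a suspension description of $\hat\phi$.

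Next, in the spirit of Lemma \ref{lem:shi}, I would define $\pi(x,u):=\hat\phi_u(x)=\phi_{\tau(x,u)}(x)$ on the suspension space $X^r=\{(x,u):0\le u<r(x)\}/\!\sim$ (with the identification $(x,r(x))\sim(\phi_1 x,0)$) and verify that on $(X\setminus\{\Delta\})^r$ this map is a homeomorphism intertwining the standard suspension flow $\psi^r$ with $\hat\phi$. A direct computation, essentially the content of formula \eqref{eq:three}, then shows that the pushforward of the normalized suspension measure $\mu^r:=(\mu\otimes dt)/E_\mu(r)$ under $\pi$ equals $\hat\mu$. The fixed point $\Delta$ must be treated separately, since it is a genuine fixed point of $\hat\phi$ but corresponds to a periodic circle in the naive suspension; for the entropy identity this only matters when $\mu$ is the atom at $\Delta$, in which case both $h_\mu(\phi)$ and $h_{\hat\mu}(\hat\phi)$ vanish and the formula reduces to the trivial $0=0/c$.

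Finally I would invoke the classical Abramov formula for suspension flows, $h_{\mu^r}(\psi^r)=h_\mu(\phi_1)/E_\mu(r)$, together with Lemma \ref{lem:five} and the flow-entropy identity $h_\mu(\phi)=h_\mu(\phi_1)$, to conclude
\begin{align*}
h_{\hat\mu}(\hat\phi)=h_{\mu^r}(\psi^r)=\frac{h_\mu(\phi_1)}{E_\mu(r)}=\frac{h_\mu(\phi)}{E_\mu(\theta(x,1))}.
\end{align*}
The principal technical hurdle is the measure-theoretic identification of $\hat\mu$ with $\pi_*\mu^r$ and the careful separation of the atomic case at $\Delta$; both are conceptually routine but require patient bookkeeping, while Abramov's formula itself is classical.
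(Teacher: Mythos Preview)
Your proposal is correct, but it takes a genuinely different route from the paper. The paper's proof is essentially a one-line citation: it observes that under the hypotheses of Lemma~\ref{lem:nin} the regular set $\hat X$ coincides with $X$, and then invokes a known identity from the time-change literature (the reference \cite{2008TIME}),
\[
h_{\hat\mu}(\hat\phi_t)\,\hat\mu(\hat X)\,E_\mu(\theta(x,1)) = h_\mu(\phi_t)\,\mu(X),
\]
from which the result follows immediately by specializing to $t=1$ and using $\hat\mu(\hat X)=\mu(X)=1$. No suspension model is built and no conjugacy is verified; the entropy relation is simply quoted.

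Your approach, by contrast, reconstructs the result from first principles: you realize $\hat\phi$ as (conjugate to) the suspension of the time-$1$ map $\phi_1$ under the roof $r=\theta(\cdot,1)$, identify $\hat\mu$ with the normalized product measure $\mu^r$ via formula~\eqref{eq:three}, and then apply the classical Abramov formula for special flows. This is more work but also more transparent --- it explains \emph{why} the factor $E_\mu(\theta(x,1))$ appears, rather than pulling it from a reference --- and it makes the role of Lemma~\ref{lem:shi} and Lemma~\ref{lem:five} in the overall architecture of Section~3 clearer. The paper's version is quicker precisely because it outsources this mechanism to \cite{2008TIME}; yours is self-contained at the cost of the bookkeeping you flag (the measure identification and the atomic case at $\Delta$), both of which are indeed routine.
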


\begin{proof}
	It is only necessary to show that the above equality holds for any non-trivial measure $\mu $.According to the knowledge of the time-changed flow, the regular set $\hat X = \left\{ {x \in X|\theta \left( {x,t} \right) > 0,\forall t > 0} \right\}$, and $\hat X = X$ .
	
	According to ref, for all fixed $t \in {\rm{R}}$, we can get 
	\begin{center}
		${h_{\hat \mu }}\left( {{{\hat \phi }_t}} \right)\hat \mu \left( {\hat X} \right){E_\mu }\left( {\theta \left( {x,1} \right)} \right) = {h_\mu }\left( {{\phi _t}} \right)\mu \left( X \right)$.
	\end{center}
	Therefore, from the definition of the measure entropy of flow,
	\begin{center}
		${h_{\hat \mu }}\left( {\hat \phi } \right) = \frac{{{h_\mu }\left( \phi  \right)}}{{{E_\mu }\left( {\theta \left( {x,1} \right)} \right)}}$.
	\end{center}
	
	In summary, Lemma \rm{3.7} is proved.
\end{proof}

\textbf{The proof of the Theorem A.} Based on the variational principle of flow, there exists $\ mu \ in {{\ rm M} _ {erg, {\ Phi ^ 1}}} $, such that ${h_\mu }\left( {{\Phi ^1}} \right) = c > 0$ . Define a constant function on the compact metric space ${X^1}$, $a\left({x,u} \right) = \frac{c}{b}$, $\forall \left({x,u} \right) \in {X^1}$. Clearly, $a\left({x,u} \right)$ is continuous with respect to $\left({x,u} \right)$. Define an additive function with $a\left({x,u} \right)$
\begin{center}
	$\theta :{X^1} \times {\rm{R}} \to {\rm{R}}$, $\theta \left( {x,u,t} \right) = \int\limits_0^t {a\left( {{\Phi ^1}_t\left( {x,u} \right)} \right)ds} $.
\end{center}
Note that $\left({{X^1},{{\hat \Phi}^1}} \right)$is the time-changed flow of  $\left({{x ^1},{\Phi ^1}} \right)$ induced by the additive function $\theta \left({X,u,t} \right)$ 

Since $0 < \int\limits_{{X^1}} {\theta \left( {x,u,1} \right)} d\mu  = \int\limits_{{X^1}} {a\left( {x,u} \right)} d\mu  = \frac{c}{b} <  + \infty $ ,so that applying Lemma \rm{3.6} to the measure $\mu $, there exists $\hat \mu \in {{\rm M}_{erg,{{\hat \Phi}^1}}}$satisfying the equation \rm{(3)}. According to Lemma \rm{3.7}, it can be obtained
\begin{center}
	${h_{\hat \mu }}\left( {{{\hat \Phi }^1}} \right) = \frac{{{h_\mu }\left( {{\Phi ^1}} \right)}}{{{E_\mu }\left( {\theta \left( {x,u,1} \right)} \right)}} = \frac{c}{{{\raise0.7ex\hbox{$c$} \!\mathord{\left/
					{\vphantom {c b}}\right.\kern-\nulldelimiterspace}
				\!\lower0.7ex\hbox{$b$}}}} = b$.
\end{center}

According to Lemma \rm{3.4}, the time-changed flow $\left( {{X^1},\hat \Phi } \right)$  of the flow $\left( {{X^1},{\Phi ^1}} \right)$ and the flow $\left( {{X^{\frac{c}{b}}},{\Phi ^{\frac{c}{b}}}} \right)$ are topologically conjugate, where the homeomorphism  $\pi :{X^1} \to {X^{\frac{a}{b}}}$ is defined as
\begin{center}
	$\pi \left( {x,u} \right) = \left\{ {\begin{array}{*{20}{c}}
			{\left( {x,u\frac{c}{b}} \right),}&{x \in {X_ * },0 \le u < 1}\\
			{fixed-point,}&{fixed-point }
	\end{array}} \right.$.
\end{center}
According to Lemma\rm{3.5}, there exists ${\pi _ * }\hat \mu  \in {{\rm M}_{erg,{\Phi ^{\frac{c}{b}}}}}$ such that  ${h_{{\pi _ * }\hat \mu }}\left( {{\Phi ^{\frac{c}{b}}}} \right) = {h_{\hat \mu }}\left( {{{\hat \Phi }^1}} \right) = b$.

In summary, Theorem \rm{A} is proved.

\textbf{The proof of the Theorem B.} Define a constant function on the compact metric space ${X^1}$, $a\left({x,u} \right) = \frac{{h\left(T \right)}}{b}$, $\forall \left({x,u} \right) \in {X^1}$. Clearly, $a\left({x,u} \right)$ is continuous with respect to $\left({x,u} \right)$. Define an additive functions with $a\left({x,u} \right)$
\begin{center}
	$\theta :X^1 \times \rm{R} \rightarrow \rm{R}$,
	$\theta \left( {x,u,t} \right) = \int\limits_0^t {a\left( {{\Phi ^1}_t\left( {x,u} \right)} \right)ds} $.
\end{center}
Note that $\left({{X^1},{{\hat \Phi}^1}} \right)$ is the time-changed flow of $\left({{x ^1},{\Phi ^1}} \right)$ induced by the additive function $\theta \left({X,u,t} \right)$ . For any non-atomic Borel probability invariant ergodic measure $\mu $of the flow ${\Phi ^1}$, we have 
\begin{center}
	$0 < \int\limits_{{X^1}} {\theta \left( {x,u,1} \right)} d\mu  = \int\limits_{{X^1}} {a\left( {x,u} \right)} d\mu  = \frac{{h\left( T \right)}}{b} <  + \infty $.
\end{center}
Applying Lemma \rm{3.6}, it follows that
\begin{center}
	${h_{\hat \mu }}\left( {\hat \Phi } \right) = \frac{{{h_\mu }\left( \Phi  \right)}}{{{E_\mu }\left( {\theta \left( {x,u,1} \right)} \right)}}$,
\end{center}
where $\mu $ ,$\hat \mu $ satisfy the equation \rm{(3)}. According to the variational principle of flow, it follows that
\begin{equation*}
	\begin{split}
		h\left( {{{\hat \Phi }^1}} \right) &= \mathop {\sup }\limits_{\hat \mu  \in {{\rm M}_{erg,{{\hat \Phi }^1}}}} {h_{\hat \mu }}\left( {\hat \Phi } \right)\\
		&= \mathop {\sup }\limits_{\mu  \in {{\rm M}_{erg,{\Phi ^1}}}} \frac{{{h_\mu }\left( {{\Phi ^1}} \right)}}{{{E_\mu }\left( {\theta \left( {x,u,1} \right)} \right)}}\\
		& = \frac{{h\left( {{\Phi ^1}} \right)}}{{{E_\mu }\left( {\theta \left( {x,u,1} \right)} \right)}}\\
		& = \frac{{h\left( T \right)}}{{{\raise0.7ex\hbox{${h\left( T \right)}$} \!\mathord{\left/
						{\vphantom {{h\left( T \right)} b}}\right.\kern-\nulldelimiterspace}
					\!\lower0.7ex\hbox{$b$}}}}\\
		& = b.
	\end{split}
\end{equation*}

According to Lemma \rm{3.4}, the flow $\left( {{X^{\frac{{h\left( T \right)}}{b}}},{\Phi ^{\frac{{h\left( T \right)}}{b}}}} \right)$ and the time-changed flow $\left( {{X^1},\hat \Phi } \right)$ of $\left( {{X^1},{\Phi ^1}} \right)$ are topologically conjugate. The homeomorphism $\pi :{X^1} \to {X^{\frac{{h\left( T \right)}}{b}}}$ is defined as
\begin{center}
	$\pi \left( {x,u} \right) = \left\{ {\begin{array}{*{20}{c}}
			{\left( {x,u\frac{{h\left( T \right)}}{b}} \right),}&{x \in {X_ * },0 \le u < 1}\\
			{fixed-point,}&{fixed-point}
	\end{array}} \right.$.
\end{center}
According to Lemma \rm{3.5}, we have $h\left( {{\Phi ^{\frac{{h\left( T \right)}}{b}}}} \right) = h\left( {{{\hat \Phi }^1}} \right) = b$.

In summary, Theorem \rm{B} is proved.

\section{The flow with no entropy explosion}

\subsection{Construction of the example}

Now we are the structure of the example is given in detail.

Step 1, Selecting a unidirectional infinite sequence $y = \left({y\left(i \right)} \right) \in \Sigma \left(2 \right)$.

Make some notations. Let $A$, $B$ be finite sequences of $m $ and $n$ 0's or 1's, respectively. Write as $ \left| A \right| = m$, $\left| B \right| = n $. If the finite sequence $A$ and $B$ are combined end to end, the resulting new sequence is denoted as $AB$. Apparently, $\left| {AB} \right| = \left| A \right| + \left| B \right| = m + n $.

For any positive integer $n$, let $P\left(n \right)$ be a finite sequence of 0's or 1's of length $n$, and $O\left(n \right)$ be a finite sequence of all zeros of length $n$. Consider in the following an infinite matrix made of finite sequences:
\begin{center}
	\[\left( {\begin{array}{*{20}{c}}
			{P\left( 1 \right)O\left( 1 \right)P\left( 1 \right)}&{P\left( 1 \right)O\left( 2 \right)P\left( 1 \right)}& \cdots &{P\left( 1 \right)O\left( m \right)P\left( 1 \right)}& \cdots \\
			{P\left( 2 \right)O\left( 4 \right)P\left( 2 \right)}&{P\left( 2 \right)O\left( 5 \right)P\left( 2 \right)}& \cdots &{P\left( 2 \right)O\left( {3 + m} \right)P\left( 2 \right)}& \cdots \\
			\vdots & \vdots & \vdots & \vdots & \vdots \\
			{P\left( n \right)O\left( {{n^2}} \right)P\left( n \right)}&{P\left( n \right)O\left( {{n^2} + 1} \right)P\left( n \right)}& \cdots &{P\left( n \right)O\left( {{n^2} - 1 + m} \right)P\left( n \right)}& \cdots \\
			\vdots & \vdots & \vdots & \vdots & \vdots
	\end{array}} \right)\]
\end{center}

Denote each finite sequence in an infinite matrix as ${Q_i}$, $i > 0$ in the following order:

\begin{center}
	\[\left( {\begin{array}{*{20}{c}}
			{{Q_1}}&{{Q_2}}&{{Q_4}}&{{Q_7}}&{{Q_{11}}}& \cdots \\
			{{Q_3}}&{{Q_5}}&{{Q_8}}&{{Q_{12}}}& \cdots & \cdots \\
			{{Q_6}}&{{Q_9}}&{{Q_{13}}}& \cdots & \cdots & \cdots \\
			{{Q_{10}}}&{{Q_{14}}}& \cdots & \cdots & \cdots & \cdots \\
			{{Q_{15}}}& \cdots & \cdots & \cdots & \cdots & \cdots \\
			\cdots & \cdots & \cdots & \cdots & \cdots & \cdots
	\end{array}} \right)\]
\end{center}
Obviously, when each $P\left(n \right)$ is determined, ${Q_i}$ is also completely determined.

Let ${A_1} = \left(1 \right)$, $P\left(1 \right) = {A_1}$, Then ${Q_1} = P\left(1 \right)O\left(1 \right)P\left(1 \right)$. And all ${Q_i}$ in the first row of the infinite matrix above is determined. Let ${A_2} = {A_1}O\left( {{{\left| {{A_1}} \right|}^2}} \right){Q_1}$ . Apparently, $\left| {{A_2}} \right| \ge 2$ . Taking $P\left(2 \right)$ to be the first two terms of ${A_2}$, then all ${Q_i}$ in the second row of the infinite matrix above are determined. By induction of $n \ge 3$, it is assumed that ${A_{n-1}}$, $P\left( n-1 \right)$, ${Q_{n-1}}$ are all determined. Let  ${A_n} = {A_n-1}O\left( {{{\left| {{A_{n-1}}} \right|}^2}} \right){Q_{n-1}}$. Apparently, $\left| {{A_n}} \right| \ge n$. Taking $P\left(n \right)$ to be the first $n$ term of ${A_n}$, then ${Q_n}$ is determined by some $P\left(j \right)$, $1 \le j \le n$.

From the structure, the first $\left| {{A_{n - 1}}} \right|$ terms of $A_n$ is $A_{n - 1}$. So the following limit exists:

\begin{center}
	$y = \mathop {\lim }\limits_{n \to \infty } \left( {{A_n}000 \cdots } \right) \in \Sigma \left( 2 \right)$.
\end{center}

It clearly follows that $y$ is an aperiodic reply point of $T$. That is, for any $\varepsilon > 0$, there exists $n > 0$ such that
\begin{center}
	$d\left( {y,{T^n}y} \right) < \varepsilon $.
\end{center}

Step 2, Constructing a subsystem of the symbol system $\left({\Sigma \left(2 \right),T} \right)$.

Let $Y = \omega \left({y,T} \right)$ be the $\omega$- limit set of $y$ with respect to $T$. So $Y$  is the compact invariant subset of $T$, and  $T{\rm{ = }}T{|_Y}:Y \to Y$ form subsystem. 

\textbf{Claim.} $T{|_Y}:Y \to Y$ is a homeomorphism.
\begin{proof}
	First, $TY \subset Y$ because $T$ is continuous.\\
	Any given $x \in Y$, there exists ${n_i} \to + \infty $ such that ${T^{{n_i}}}y \to x$. There exists ${n_i} \to  + \infty $ for all $x \in Y$ such that ${T^{{n_i}}}y \to x$. Due to the compactness, we obtain that $\left\{ {{T^{{n_{{i_j}}} - 1}}y} \right\}$ is a converging subsequence of $\left\{ {{T^{{n_i} - 1}}y} \right\}$. Denote as ${T^{{n_{{i_j}}} - 1}}y \to z$. Obviously, $z \in Y$ is unique. It follows that $T\left( {{T^{{n_{{i_j}}} - 1}}y} \right) \to Tz$. By the uniqueness of the limit, it follows that $Tz = x$. Denote as $z = {T^ -}x$. Therefore, $T$ is a one-to-one mapping and ${T^ -}$ is continuous.\\
	In summary, the claim is proved.
\end{proof}

Step 3, Suspending $\left( {Y,T} \right)$ into a flow.

The discrete system $\left( {Y,T} \right)$  has a unique fixed point (denoted ) $\left\{ 0 \right\} \in Y$, which is a bidirectional infinite sequence with each position 0. Let ${Y_ * } = Y\backslash \left\{ 0 \right\}$. Then ${Y_ * }$ is a locally compact space. Let $\gamma :{Y_ * } \to \left( {0, + \infty } \right)$ be a positive continuous function. Then $\left( {{Y_ * },T} \right)$ can be suspended with $\gamma $ to get a flow. Specifically, establish an equivalence relationship on $\left\{ {\left( {y,u} \right)|0 \le u \le \gamma \left( y \right),y \in {Y_ * }} \right\}$: 
\begin{center}
	$\left( {y,\gamma \left( y \right)} \right) \sim \left( {Ty,0} \right)$.
\end{center}
We obtain the quotient space ${Y_ * }^\gamma $. The flow ${\psi ^\gamma }:{Y_ * }^\gamma  \times {\rm{R}} \to {Y_ * }^\gamma $ is defined as
\begin{center}
	${\psi ^\gamma }_t\left( {y,u} \right) = \left( {y,u + t} \right)$,\quad $ - u \le t < \gamma \left( y \right) - u$.
\end{center}
In this case, ${Y_ * }^\gamma $  is the metric space of local compactness and ${\psi ^\gamma }$ is the flow defined on the space ${Y_ * }^\gamma $.

Let ${Y^\gamma } = {Y_ * }^\gamma  \cup \left\{ {{y_\infty }} \right\}$  is the single-point compactification of ${Y_ * }^\gamma $. Then ${Y^\gamma }$  is the compactness metric space. And $\left( {{y_n},{u_n}} \right) \to {y_\infty }$ on the space ${Y^\gamma }$  if and only if there is ${y_n} \to 0$ on the space $Y$. Put ${\psi ^\gamma }_t\left( {{y_\infty }} \right) = {y_\infty }$. For all $t \in {\rm{R}}$, we expand the flow ${\psi ^\gamma }$  on ${Y_ * }^\gamma $ to the flow on  ${Y^\gamma }$ , and denote it ${\Psi ^\gamma }$.

\subsection{The proof of Theorem C and Theorem D}

Let ${\Psi ^1}$ be the flow constructed by the roof function $\gamma  = 1$. Now give the main theorem of this chapter.

\textbf{Theorem C.} The flow ${\Psi ^1}$ is topologically transitive. \\
\textbf{Theorem D.} The flow ${\Psi ^1}$ has only one invariant ergodic measure, which is supported on the fixed point ${y_\infty}$.

According to Theorems C and D, it is concluded that no entropy explosion can occur for the flow ${\Psi ^1}$ constructed above.

Before proving Theorems C and D, first show the topological complexity and statistical triviality of the subsystem $\left({Y,T} \right)$, see the reference\cite{1991Point}.

\begin{lemma}\label{lem:ee}
	Let $f:X \to X$ be a continuous mapping on a compact metric space, and the system $\left({X,f} \right)$ be topologically transitive. Then $f$ is topologically mixing if and only if for any $\varepsilon > 0$, there exists $N > 0$ such that ${f^n}\left( {V\left( {x,\varepsilon } \right)} \right) \cap V\left( {x,\varepsilon } \right) \ne \emptyset $ when $n \ge N$, where $X = \omega \left( {x,f} \right)$ ,\quad $V\left( {x,\varepsilon } \right)$ refers to the open ball with $x$as the center and $\varepsilon $as the radius.
\end{lemma}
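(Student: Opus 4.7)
The plan is to prove the two directions separately. The forward ($\Rightarrow$) implication is immediate: if $f$ is topologically mixing then, applying the definition to the pair of identical non-empty open sets $U=V=V(x,\varepsilon)$, I obtain $N>0$ with $f^n(V(x,\varepsilon))\cap V(x,\varepsilon)\neq\emptyset$ for every $n\ge N$, which is precisely the stated conclusion.

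For the converse ($\Leftarrow$) I must show that for any non-empty open $U,V\subset X$ there exists $M>0$ with $f^m(U)\cap V\neq\emptyset$ for all $m\ge M$. The strategy is to transport the hypothesis, which is purely local at $x$, to the arbitrary pair $(U,V)$ by pre- and post-composing with suitable iterates of $f$ applied along the dense orbit of $x$. Concretely, since $X=\omega(x,f)$, the orbit of $x$ enters every non-empty open set infinitely often, so I may pick integers $k_1<k_2$ with $f^{k_1}(x)\in U$ and $f^{k_2}(x)\in V$. By continuity of $f^{k_1}$ and $f^{k_2}$, I then choose $\delta>0$ small enough that
\[
 f^{k_1}(V(x,\delta))\subset U \qquad\text{and}\qquad f^{k_2}(V(x,\delta))\subset V.
\]

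Applying the hypothesis to $\varepsilon=\delta$ yields $N_0>0$ such that for each $n\ge N_0$ there exists $y\in V(x,\delta)$ with $f^n(y)\in V(x,\delta)$. For such a $y$ one has $f^{k_1}(y)\in U$ and $f^{k_2}(f^n(y))=f^{n+k_2}(y)\in V$. Rewriting $f^{n+k_2}(y)=f^{n+k_2-k_1}(f^{k_1}(y))$ shows that $f^{n+k_2-k_1}(U)\cap V\neq\emptyset$. Setting $M:=N_0+k_2-k_1$ therefore delivers $f^m(U)\cap V\neq\emptyset$ for every $m\ge M$, which is topological mixing.

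The only step requiring any care is the selection of $k_1<k_2$, where I use the fact that $X=\omega(x,f)$ forces the forward orbit of $x$ to meet each non-empty open set infinitely often so that both indices can be prescribed with $k_1<k_2$; everything else is a transparent chaining of the local return at $x$ with forward iteration, so I do not anticipate any serious obstacle.
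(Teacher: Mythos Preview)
Your argument is correct in both directions. The forward implication is immediate as you note, and the converse is handled cleanly: density of the forward orbit (guaranteed by $X=\omega(x,f)$) supplies the indices $k_1<k_2$, continuity of the iterates $f^{k_1},f^{k_2}$ lets you shrink the ball at $x$, and the local return hypothesis then transports to the arbitrary pair $(U,V)$ exactly as you describe. The bookkeeping $m=n+k_2-k_1$ is right.

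As for comparison with the paper: there is nothing to compare. The paper does not prove this lemma at all; it simply records the statement and directs the reader to the reference \cite{1991Point} for the three preparatory lemmas about $(Y,T)$. Your write-up therefore supplies strictly more than the paper does here, and the argument you give is the standard one for this characterization of mixing via a single transitive point.
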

\begin{lemma}\label{lem:er}
	$\left( {Y,T} \right)$ is topologically mixing.
\end{lemma}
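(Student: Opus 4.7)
The plan is to invoke Lemma~\ref{lem:ee} with the base point $y$ itself. Topological transitivity, the hypothesis of that lemma, is immediate: $y$ is the ``aperiodic reply point'' of Step 1, so $y$ is recurrent and therefore belongs to $\omega(y,T)=Y$; $T$-invariance of $Y$ then makes the forward orbit of $y$ dense in $Y$. What remains is to produce, for every $\varepsilon>0$, an $N=N(\varepsilon)$ such that $T^n V(y,\varepsilon)\cap V(y,\varepsilon)\neq\emptyset$ for all $n\ge N$.

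Given $\varepsilon$, I would fix $k$ so large that every $z\in Y$ agreeing with $y$ on the window $[-k,k]$ lies in $V(y,\varepsilon)$. Under the natural bilateral extension $y(j)=0$ for $j<0$, and using that $y$ begins with $P(k+1)$, this window reads $W_0:=O(k)P(k+1)$. My claim would be that $N:=(k+1)(k+2)$ suffices. Given $n\ge N$, set $l:=n-(k+1)\ge(k+1)^2$; then $P(k+1)O(l)P(k+1)$ is an entry in row $k+1$ of the Step 1 matrix, hence equals $Q_i$ for some $i$, and the recursion $A_{i+1}=A_iO(|A_i|^2)Q_i$ places this $Q_i$ inside $y$ at position $p:=|A_i|+|A_i|^2$, preceded by $|A_i|^2$ zeros. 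Choosing $i$ large enough that $|A_i|^2\ge k$, one then reads off $y(p-k)\cdots y(p+k)=W_0$ (the leading $O(k)$ coming from the outer buffer and the next $k+1$ letters being the left copy of $P(k+1)$) and, because the $O(l)$ buffer supplies $l\ge k$ zeros immediately before the second copy of $P(k+1)$, also $y(p+n-k)\cdots y(p+n+k)=W_0$, noting that this second $P(k+1)$ starts at position $p+(k+1)+l=p+n$.

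Setting $z:=T^p y$, the $T$-invariance of $Y$ puts $z$ in $Y$, and the two windows above force both $z$ and $T^n z=T^{p+n}y$ to lie in $V(y,\varepsilon)$; hence $T^n z\in T^n V(y,\varepsilon)\cap V(y,\varepsilon)$, and Lemma~\ref{lem:ee} closes the proof. The only genuine obstacle is the combinatorial bookkeeping --- certifying that for every large $n$ the block $P(k+1)O(n-k-1)P(k+1)$ actually occurs in $y$ (this is what the row-by-row enumeration of the Step 1 matrix, together with the recursion $A_{i+1}=A_iO(|A_i|^2)Q_i$, guarantees) and that the outer buffer of $|A_i|^2$ zeros is always long enough to supply the $k$ leading zeros of $W_0$. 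Since $|A_i|\ge i$ and the smallest $i$ that places $Q_i$ in row $k+1$ is $(k+1)(k+2)/2$, that buffer has length at least $k^2$, so no additional work is needed.
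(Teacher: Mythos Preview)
Your argument is correct. The paper itself does not supply a proof of this lemma; it simply refers the reader to \cite{1991Point} for Lemmas~\ref{lem:ee}--\ref{lem:es}, so there is no in-paper proof to compare against. Your approach---reducing via Lemma~\ref{lem:ee} to the return condition on balls about $y$, and then exhibiting for each large $n$ an explicit occurrence of the block $O(k)P(k+1)O(l)P(k+1)$ inside $y$ by reading it off from the recursion $A_{i+1}=A_iO(|A_i|^2)Q_i$---is exactly the kind of direct combinatorial verification one expects for this construction, and the bookkeeping you outline (in particular, that the index $i$ attached to $P(k+1)O(l)P(k+1)$ is automatically at least $(k+1)(k+2)/2$, forcing $|A_i|^2\ge k$) is sound.

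Two small remarks on presentation. First, the phrase ``choosing $i$ large enough'' is momentarily misleading, since $i$ is determined by $n$ and $k$; you immediately resolve this in your last paragraph, but it would read more cleanly to state up front that the relevant $i$ satisfies $i\ge (k+1)(k+2)/2$ and hence $|A_i|^2\ge (k+1)^2>k$. Second, the bilateral extension $y(j)=0$ for $j<0$ is indeed the intended reading of $y=\lim_n(A_n000\cdots)\in\Sigma(2)$, but since the paper never makes this explicit you might add a clause noting that this is consistent with the paper's assertion that $y$ is recurrent and that $\{0\}\in Y$.
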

\begin{lemma}\label{lem:es}
	$\left( {Y,T} \right)$ has only one invariant ergodic measure, which is supported on the fixed point $\left\{ 0 \right\}$.
\end{lemma}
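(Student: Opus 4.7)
The plan is to show that $(Y,T)$ is uniquely ergodic with invariant measure $\delta_0$, the Dirac mass concentrated at the all-zero fixed point. The engine of the argument will be a uniform sparsity estimate for the occurrences of $1$ in the generating sequence $y$: I aim to produce a function $\epsilon(N) \to 0$ such that, for every $a \geq 0$,
\[
\#\bigl\{\, a \le k < a + N : y(k) = 1\,\bigr\} \le \epsilon(N)\cdot N.
\]

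\textbf{Step 1 (density zero of $1$'s in $y$, uniformly in the window position).} Using the recursion $A_n = A_{n-1}\,O(|A_{n-1}|^2)\,Q_{n-1}$ I decompose $y = A_1 R_1 R_2 R_3 \cdots$, with $R_k := O(|A_k|^2)\,Q_k$. Two quantitative features drive the bound: (i) $|R_k| \geq |A_k|^2$ grows doubly exponentially in $k$, since the recursion forces $|A_{k+1}| \ge |A_k|^2$; (ii) the number of $1$'s inside $R_k$ equals the number of $1$'s in $Q_k$, which is at most $2r_k$, where $r_k$ is the row index of $Q_k$ in the infinite matrix and satisfies $r_k = O(\sqrt{k})$ (the anti-diagonal enumeration places the $j$-th diagonal at index $j(j+1)/2$). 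A window of length $N$ in $y$ therefore meets only $R_k$'s with $k$ up to some threshold $k_* = O(\log\log N)$, so the total count of $1$'s in such a window is at most $\sum_{k \le k_*+O(1)} 2r_k = O(k_*^{3/2}) = o(N)$.

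\textbf{Step 2 (transfer to the whole of $Y$).} Each $x \in Y = \omega(y,T)$ is a limit $x = \lim_i T^{n_i} y$ in the product topology. For any fixed $N$, the prefix $(x(0),\dots,x(N-1))$ coincides with $(y(n_i),\dots,y(n_i+N-1))$ for all sufficiently large $i$, and so Step 1 yields
\[
\limsup_{N\to\infty}\frac{1}{N}\,\#\bigl\{\, 0 \le k < N : x(k)=1\,\bigr\} = 0
\qquad\text{for every }x \in Y.
\]

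\textbf{Step 3 (Birkhoff and conclusion).} Let $\mu$ be any ergodic $T$-invariant probability on $Y$, and set $C_1 := \{x \in Y : x(0)=1\}$. Birkhoff's theorem gives $\frac{1}{N}\sum_{k=0}^{N-1}\chi_{C_1}(T^k x) \to \mu(C_1)$ for $\mu$-a.e.\ $x$; Step 2 forces this a.e.\ limit to be $0$, so $\mu(C_1) = 0$. By $T$-invariance $\mu\{x : x(k)=1\} = 0$ for every $k \in \Z$, so $\mu$ is concentrated on the unique sequence that is identically $0$; this is precisely the fixed point, hence $\mu = \delta_0$.

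The main obstacle is the combinatorial bookkeeping in Step 1: morally the $1$'s are confined to the short $P$-words and are separated by zero-gaps of length $|A_k|^2$, but because $A_n$ is built recursively and the $Q_k$'s are enumerated along anti-diagonals, one must carefully treat windows that straddle the boundary between two consecutive blocks $R_k$ and $R_{k+1}$, and verify that the partial segments at the window's endpoints can contribute only $O((\log\log N)^{3/2})$ additional $1$'s.
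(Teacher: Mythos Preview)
The paper does not supply its own proof of this lemma; it merely cites an outside reference. So there is no argument in the paper to compare against, and your proposal stands or falls on its own.

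Your overall strategy---show that the symbol $1$ has uniform upper density zero along $y$, pass this to every $x\in Y=\omega(y,T)$ by approximation, and finish with Birkhoff---is sound and is the natural route. Steps~2 and~3 are fine. The problem is in Step~1. You assert that ``a window of length $N$ in $y$ meets only $R_k$'s with $k$ up to some threshold $k_*=O(\log\log N)$''. This is simply false once the window is allowed to start at an arbitrary position $a$: for large $a$ the window $[a,a+N)$ can lie entirely inside a single block $R_k$ with $k$ enormous, so there is no bound on the index $k$ in terms of $N$ alone. Your summation $\sum_{k\le k_*}2r_k$ then has nothing to do with the window in question.

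What you have actually proved is the density-zero statement for the \emph{prefix} window $[0,N)$, and your closing paragraph about ``windows that straddle the boundary between two consecutive $R_k$'' does not close the gap either, since the hard case is a window inside a \emph{single} $R_k$. The missing observation is the self-similarity that the construction builds in: each $Q_k=P(r_k)\,O(\cdot)\,P(r_k)$, and $P(r_k)$ is by definition the length-$r_k$ prefix of $y$ itself. Hence a window lying inside one of the $P(r_k)$ copies is literally equal to a window of $y$ starting at some position $a'<r_k<a$. Iterating this reduction (the starting position strictly decreases, so it terminates) one lands either in a zero block, or in a window that genuinely straddles an $O(|A_j|^2)$ or $O(r_j^2+\cdots)$ gap; in the latter case $N$ dominates the square of the relevant $P$-length and your intended estimate $O((\log\log N)^{3/2})$ goes through. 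Equivalently, one can show that among all length-$N$ windows the prefix $[0,N)$ maximises the count of $1$'s. Either way, the recursive nature of the $P$-blocks must be invoked explicitly; without it Step~1 is incomplete.
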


The topological complexity and statistical tribality of the flow ${\Psi ^1}$ are discussed next.

\textbf{The proof of Theorem C.} It is known that the continuous flow ${\Psi ^1}$ is defined on the compact metric space ${Y^\gamma}$. Just prove that ${Y^\gamma} = \omega \left({\left({y,0} \right),{\Psi ^1}} \right)$.
Let ${y_\infty }$ be not $\left( {y',u} \right) \in {Y^\gamma }$. There exists ${n_i} \to  + \infty $ such that ${T^{{n_i}}}y \to y'$ because $y' \in Y = \omega \left( {y,T} \right)$. Consider ${\Psi ^1}_{{n_i} + u}\left( {y,0} \right) \sim \left( {{T^{{n_i}}}y,u} \right) \in {Y^\gamma }$. It follows that ${\Psi ^1}_{{n_i} + u}\left( {y,0} \right) \to \left( {y',u} \right)$.

Let $\left( {y',u} \right) = {y_\infty } \in {Y^\gamma }$. There exists ${n_i} \to  + \infty $ such that ${T^{{n_i}}}y \to \left\{ 0 \right\}$ because $\left\{ 0 \right\} \in Y = \omega \left( {y,T} \right)$. Consider ${\Psi ^1}_{{n_i}}\left( {y,0} \right) \sim \left( {{T^{{n_i}}}y,0} \right) \in {Y^\gamma }$. It follows that ${\Psi ^1}_{{n_i}}\left( {y,0} \right) \to {y_\infty }$ .

Therefore, ${Y^\gamma } = \omega \left( {\left( {y,0} \right),{\Psi ^1}} \right)$. That is the flow ${\Psi ^1}$ is topologically transitive.

In summary, Theorem C is proved.
\begin{lemma}\label{lem:es}
	Let ${\gamma _0} = \mathop {\inf }\limits_{y \in {Y_ * }} \gamma \left( y \right) > 0$. Then there exists nontrivial measure (i.e. not  the atomic measure at $\left\{ 0 \right\}$  ) $\mu  \in {{\rm M}_{erg,T}}$ for every given nontrivial measure ( i.e. not  the atomic measure at ${y_\infty }$), such that 
	\begin{center}
		${E_{\bar \mu }}\left( f \right) = \frac{1}{{{E_\mu }\left( \gamma  \right)}}{E_\mu }\left( {\int\limits_0^{\gamma \left( x \right)} {f\left( {x,t} \right)dt} } \right)$,
	\end{center}
	for all continuous function $f$ on ${Y^\gamma}$, where 
	\begin{center}
		${E_{\bar \mu }}\left( f \right) = \int {fd\bar \mu } $ ,\quad ${E_\mu }\left( \gamma  \right) = \int {\gamma d\mu } $,\\
		${E_\mu }\left( {\int\limits_0^{\gamma \left( x \right)} {f\left( {x,t} \right)dt} } \right) = \int {\left( {\int\limits_0^{\gamma \left( x \right)} {f\left( {x,t} \right)dt} } \right)d\mu } $.
	\end{center}
\end{lemma}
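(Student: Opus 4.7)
The plan is to construct $\mu$ from $\bar\mu$ by exploiting that, because $\gamma_0=\inf\gamma>0$, the natural image of $Y_*$ at height $u=0$ serves as a Poincar\'e cross-section for $\Psi^\gamma$ with return time $\gamma(y)$. This mirrors exactly the argument behind the analogous Lemma 3.2 for the Ohno construction, and I would follow that template.

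First I would fix an $\epsilon$ with $0<\epsilon<\gamma_0$ and define, for each Borel $A\subset Y_*$,
\begin{equation*}
\nu(A)=\bar\mu\bigl(\{(y,u)\in Y^\gamma : y\in A,\ 0\le u<\epsilon\}\bigr).
\end{equation*}
Since $\gamma_0>0$, the slab over $Y_*$ is disjoint from the single-point compactification $y_\infty$, and nontriviality of $\bar\mu$ together with a standard slicing argument forces $\nu(Y_*)>0$ for sufficiently small $\epsilon$. I would set $\mu=\nu/\nu(Y_*)$ and observe that $\mu$ is supported on $Y_*$, so in particular it is not the atomic measure at $\{0\}$.

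Next I would verify that $\mu$ is $T$-invariant and ergodic. For invariance, the identification $(y,\gamma(y))\sim(Ty,0)$ means the flow carries the slab $\{(y,u):y\in T^{-1}A,\,0\le u<\epsilon\}$ onto $\{(y,u):y\in A,\,0\le u<\epsilon\}$ up to a null boundary, so $\Psi^\gamma$-invariance of $\bar\mu$ gives $\nu(T^{-1}A)=\nu(A)$ after a monotone-class argument on slab-parameters. For ergodicity, any $T$-invariant $A\subset Y_*$ lifts to the $\Psi^\gamma$-invariant saturation $\{(y,u):y\in A,\,0\le u<\gamma(y)\}\subset Y^\gamma$, whose $\bar\mu$-measure is $0$ or $1$ by ergodicity of $\bar\mu$, forcing $\mu(A)\in\{0,1\}$.

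Finally, I would verify the integral formula. The disintegration of $\bar\mu$ on the suspension along vertical fibers is forced by $\Psi^\gamma$-invariance to be a multiple of Lebesgue measure on each fiber $\{(y,t):0\le t<\gamma(y)\}$, with base $\mu$. Thus
\begin{equation*}
E_{\bar\mu}(f)\;=\;C\cdot E_\mu\!\left(\int_0^{\gamma(y)} f(y,t)\,dt\right)
\end{equation*}
for some constant $C>0$. Applying this to $f\equiv 1$ and using that $\bar\mu$ is a probability measure with $\bar\mu(\{y_\infty\})=0$ (by nontriviality combined with ergodicity) yields $C=1/E_\mu(\gamma)$, which also forces $E_\mu(\gamma)<\infty$.

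The main obstacle is justifying the decomposition $\bar\mu=(\mu\times dt)/E_\mu(\gamma)$ in its precise form: one must argue carefully that on the cross-section the flow-invariance really does pin down a unique $T$-invariant base measure, and that $E_\mu(\gamma)$ is necessarily finite. Both points rely critically on $\gamma_0>0$, which ensures the cross-section is transverse and the slab computations above are valid; the rest of the verification is a Fubini-style unpacking parallel to the one used for Lemma 3.2.
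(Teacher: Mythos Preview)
Your proposal is essentially correct and follows the standard Ambrose--Kakutani/suspension-flow argument; the paper itself gives no independent proof here but simply defers to the references \cite{1980A,R1987Ergodic}, and what you outline is precisely the construction found there (and already invoked as Lemma~3.2 for the Ohno example).

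One point in your sketch needs tightening. In the $T$-invariance step you claim the flow carries the slab $\{(y,u):y\in T^{-1}A,\,0\le u<\epsilon\}$ onto $\{(y,u):y\in A,\,0\le u<\epsilon\}$. This is not literally true: the first-return time from height $0$ over $y$ to height $0$ over $Ty$ is $\gamma(y)$, which varies with $y$, so no single time-$t$ map $\Psi^\gamma_t$ realizes this. The correct route is to first establish the fiberwise-Lebesgue structure (which you do state later): flow-invariance forces $\bar\mu$ restricted to $\{(y,u):0\le u<\gamma_0\}$ to be of product form $c\,\mu\times dt$, and then $\Psi^\gamma_\epsilon$ carries the \emph{top} slab $\{(y,u):y\in T^{-1}A,\,\gamma(y)-\epsilon\le u<\gamma(y)\}$ bijectively onto the bottom slab over $A$, yielding $\mu(T^{-1}A)=\mu(A)$. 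Once you reorder the argument so that the product structure comes first, the invariance and ergodicity steps go through as you describe, and the normalization $C=1/E_\mu(\gamma)<\infty$ follows exactly as in your final paragraph.
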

\begin{proof}
	Following the reference \cite{1980A, R1987Ergodic}, it is easy to prove the conclusion.
\end{proof}

\textbf{The proof of Theorem D.} (Contradiction)

Assume that $\bar \mu $ is another invariant ergodic measure that is not supported at a fixed point. According to Lemma \rm{4.4}, there exists atomic measures that are not $\left\{0 \right\}$ such that for any continuous function $f$ on ${Y^\gamma}$,
\begin{center}
	${E_{\bar \mu }}\left( f \right) = {E_\mu }\left( {\int\limits_0^1 {f\left( {x,t} \right)dt} } \right)$,
\end{center}
where ${E_{\bar \mu }}\left( f \right) = \int {fd\bar \mu } $,\quad ${E_\mu }\left( {\int\limits_0^1 {f\left( {x,t} \right)dt} } \right) = \int {\left( {\int\limits_0^1 {f\left( {x,t} \right)dt} } \right)d\mu } $.

It follows that the subsystem $\left({Y,T} \right)$ has non-trivial invariant ergodic measure, which is obviously contradictory to the subsystem having only one invariant ergodic measure supported at the unique fixed point. Therefore, the hypothesis is not valid.

In conclusion, Theorem D is proved.

In fact, from Lemma \rm{4.4}, the suspending flow ${\Psi ^\gamma }$ is constructed by any given positive continuous function $\gamma $ which satisfies ${\gamma _0} = \mathop {\inf }\limits_{y \in {Y_ * }} \gamma \left( y \right) > 0$, contains only one invariant ergodic measure supported at the fixed point.

From the point of view of entropy, since a flow ${\Psi ^\gamma }$ contains only one invariant ergodic measure supported at a fixed point, its entropy is zero. This example also shows that entropy explosion does not happen in any flow, and entropy degeneracy happens independently.

\textbf{Acknowledgments.} 	The author sincerely thanks Professor Wenxiang Sun and Professor YunHua Zhou for their discussion and help.
\bibliography{REFFERENCE}

\end{document}